\documentclass[reqno]{amsart}

\pdfoutput=1
\usepackage[utf8]{inputenc} 
\usepackage{commath} 
 \usepackage{mathtools} 
\usepackage[colorlinks, citecolor=red, breaklinks=true]{hyperref} 
\usepackage[stretch=10, shrink=10]{microtype} 
\usepackage{cases}
\usepackage[initials,msc-links,non-compressed-cites,nobysame]{amsrefs}[2007/10/22]
\usepackage{tikz}
\usetikzlibrary{arrows.meta}

\newtheorem{theorem}{Theorem}[section]
\newtheorem{lemma}[theorem]{Lemma}

\newtheorem{corollary}[theorem]{Corollary}

\theoremstyle{definition}
\newtheorem{definition}[theorem]{Definition}

\theoremstyle{remark}
\newtheorem{remark}[theorem]{Remark}

\numberwithin{equation}{section}

\usepackage{paralist,enumitem}
\setlist{listparindent=0pt,parsep=3pt}

\newcommand{\TitleWithUrl}[1]{\IfEmptyBibField{doi}%
  {\IfEmptyBibField{url}{\textit{#1}}%
    {\IfEmptyBibField{eprint}{\href {\BibField{url}}{\textit{#1}}}{\textit{#1}}}%
    }%
  {\href {https://doi.org/\BibField{doi}}{\textit{#1}}}}
\renewcommand{\eprint}[1]{\IfEmptyBibField{url}{\url{#1}}%
  {\href {\BibField{url}}{#1}}}

\BibSpec{article}{%
    +{}  {\PrintAuthors}                {author}
    +{,} { \TitleWithUrl}               {title}
    +{.} { }                            {part}
    +{:} { \textit}                     {subtitle}
    +{,} { \PrintContributions}         {contribution}
    +{.} { \PrintPartials}              {partial}
    +{,} { }                            {journal}
    +{}  { \textbf}                     {volume}
    +{}  { \PrintDatePV}                {date}
    +{,} { \issuetext}                  {number}
    +{,} { \eprintpages}                {pages}
    +{,} { }                            {status}
    +{,} { available at \eprint}        {eprint}
    +{}  { \parenthesize}               {language}
    +{}  { \PrintTranslation}           {translation}
    +{;} { \PrintReprint}               {reprint}
    +{.} { }                            {note}
    +{.} {}                             {transition}
    +{}  {\SentenceSpace \PrintReviews} {review}
}

\BibSpec{collection.article}{%
    +{}  {\PrintAuthors}                {author}
    +{,} { \TitleWithUrl}                     {title}
    +{.} { }                            {part}
    +{:} { \textit}                     {subtitle}
    +{,} { \PrintContributions}         {contribution}
    +{,} { \PrintConference}            {conference}
    +{}  {\PrintBook}                   {book}
    +{,} { }                            {booktitle}
    +{,} { \PrintDateB}                 {date}
    +{,} { pp.~}                        {pages}
    +{,} { }                            {status}
    +{,} { available at \eprint}        {eprint}
    +{}  { \parenthesize}               {language}
    +{}  { \PrintTranslation}           {translation}
    +{;} { \PrintReprint}               {reprint}
    +{.} { }                            {note}
    +{.} {}                             {transition}
    +{}  {\SentenceSpace \PrintReviews} {review}
}
\BibSpec{book}{%
    +{}  {\PrintPrimary}                {transition}
    +{,} { \TitleWithUrl}               {title}
    +{.} { }                            {part}
    +{:} { \textit}                     {subtitle}
    +{,} { \PrintEdition}               {edition}
    +{}  { \PrintEditorsB}              {editor}
    +{,} { \PrintTranslatorsC}          {translator}
    +{,} { \PrintContributions}         {contribution}
    +{,} { }                            {series}
    +{,} { \voltext}                    {volume}
    +{,} { }                            {publisher}
    +{,} { }                            {organization}
    +{,} { }                            {address}
    +{,} { \PrintDateB}                 {date}
    +{,} { }                            {status}
    +{}  { \parenthesize}               {language}
    +{}  { \PrintTranslation}           {translation}
    +{;} { \PrintReprint}               {reprint}
    +{.} { }                            {note}
    +{.} {}                             {transition}
    +{}  {\SentenceSpace \PrintReviews} {review}
}

\BibSpec{thesis}{%
    +{}  {\PrintAuthors}                {author}
    +{.} { \TitleWithUrl}               {title}
    +{:} { \textit}                     {subtitle}
    +{,} { \PrintThesisType}            {type}
    +{,} { }                            {organization}
    +{} { \PrintDate}                  {date}
    +{,} { }                            {address}
    +{,} { \eprint}                     {eprint}
    +{,} { }                            {status}
    +{}  { \parenthesize}               {language}
    +{}  { \PrintTranslation}           {translation}
    +{;} { \PrintReprint}               {reprint}
    +{.} { }                            {note}
    +{.} {}                             {transition}
    +{}  {\SentenceSpace \PrintReviews} {review}
}

\title[Discretization principle via permutability]{Special cases of the discretization principle via permutability}

\author{Joseph Cho}
\address[Joseph Cho]{Global Leadership School, Handong Global University, Pohang, Republic of Korea}
\email{\url{jcho@handong.edu}}

\author{Mason Pember}
\address[Mason Pember]{Department of Mathematical Sciences, University of Bath, Bath, UK}
\email{\url{mason.j.w.pember@bath.edu}}

\author{Wayne Rossman}
\address[Wayne Rossman]{Department of Mathematics, Graduate School of Science, Kobe University, Kobe, Japan}
\email{\url{wayne@math.kobe-u.ac.jp}}

\subjclass[2020]{Primary: 53A70}

\newenvironment{contentsinred}{\bgroup\color{magenta}}{\egroup}
\newcommand{\by}{\begin{contentsinred}}   
\newcommand{\ey}{\end{contentsinred}}      

\begin{document}

\begin{abstract}
	We show how permutability of transforms of smooth surfaces with particular characteristics leads to discrete surfaces with discrete analogues of the same characteristics.
\end{abstract}

\maketitle

\section{Introduction}
\label{sec1}

In this brief note, we prove a result (Theorem \ref{mainthm}) that answers a query which arose during discussions at the July 2025 MATRIX meeting ``Integrable Geometry: Smooth and Discrete'' in Creswick, Australia.  

Many classes of surfaces have geometric characterizations, including special coordinate properties or curvature properties, that admit a description using integrable systems.  
The viewpoint of discrete differential geometry often hovers around the definition of integrable-systems-based discrete versions of these surfaces, reflecting the various coordinate and curvature properties.
Such discretization has been carried out in a wide variety of cases, including, for example, pseudospherical surfaces \cite{bobenko_DiscreteSurfacesConstant_1996}, constant mean curvature surfaces \cite{hertrich-jeromin_DiscreteVersionDarboux_1999}, isothermic surfaces \cite{bobenko_DiscreteIsothermicSurfaces_1996}, or $\Omega$-surfaces \cite{burstall_DiscreteOmeganetsGuichard_2023}. 
In this note, we restrict to the case of \emph{discrete isothermic surfaces} arising from the permutability of Darboux transformations \cite{bianchi_ComplementiAlleRicerche_1906} of smooth isothermic surfaces so that they are \emph{circular nets}.
(Other discretizations of isothermic surfaces include $S$-isothermic  \cite{bobenko_DiscretizationSurfacesIntegrable_1999}, $S$-conical \cite{bobenko_SconicalCMCSurfaces_2016}, or edge-constraint nets \cite{hoffmann_DiscreteParametrizedSurface_2017}, for example.)

For smooth surfaces, the integrable systems properties lead to a transformation theory for the surfaces, and the permutability of these transforms follows.
This provides grids of quadrilaterals called \emph{Bianchi quadrilaterals} that can be used to construct discrete surfaces, by taking a single point in the starting surface together with its images in the transforms. 
The principle of finding discrete analogues of various smooth surface classes in the permutability of transformations has led to the definition of many discrete surfaces.
We note here that this principle applies beyond surface theory; in fact, the principle of using transformation and permutability as a semi-discrete or discrete analogue originated in the field of integrable systems (see, for example, \cite{levi_BacklundTransformationsNonlinear_1980} or \cite{nijhoff_DirectLinearizationNonlinear_1983}).

In more detail, let $f_{0,0}:\Sigma^2 \to M$ be a map whose image is an initial surface (hence the subscript $0,0$) in 
some space form $M$ with some particular property $\mathcal{P}$. 
As we are looking at local properties of surfaces, assume $\Sigma^2$ is simply-connected.  
Suppose there is a transformation of a certain type $\mathcal{T}$ that preserves property $\mathcal{P}$, which we apply separately to produce two transformed property $\mathcal{P}$ surfaces $f_{1,0}$ and $f_{0,1}$ of $f_{0,0}$.  
Assume permutability holds, which means there is a unique fourth property $\mathcal{P}$ surface $f_{1,1}$ that is simultaneously a type $\mathcal{T}$ transform of both $f_{1,0}$ and $f_{0,1}$.
Note that all four of these surfaces are in $M$ with common domain $\Sigma^2$.  

We could repeat this process, for example, by using a type $\mathcal{T}$ transform of $f_{1,0}$ to produce a property $\mathcal{P}$ surface $f_{2,0}$, and permutability then gives us a property $\mathcal{P}$ transform $f_{2,1}$ simultaneously of both $f_{2,0}$ and $f_{1,1}$.  
Continuing in this way, we can choose $f_{m,0}$ (integers $m \geq 0$) and $f_{0,n}$ ($n \geq 0$), and all other $f_{m,n}$ ($m,n>0$) are determined by permutability.  
We then have a full grid of surfaces $f_{m,n}$ discretely parametrized by $(m,n) \in (\mathbb{Z}^+ \cup \{ 0 \})^2$.  See Figure \ref{fig:intro}.  

Now, taking one point $x$ in the common domain of the $f_{m,n}$, we have a discrete surface $\{f_{m,n}(x)\}_{m,n \geq 0}$ parametrized by $m,n$. 
We could ask if this discrete surface will have a discrete analogue of property $\mathcal{P}$.  

In many cases such as pseudospherical surfaces, isothermic surfaces, and $\Omega$-surfaces, the answer is yes.
However, the question remains unanswered in many other cases, including the classes of constant mean curvature surfaces in space forms, linear Weingarten surfaces, Guichard surfaces, and $L$-isothermic surfaces.
Each of the surface classes named here admits a transformation that keeps the special properties: Bianchi-B\"acklund transformation \cite{bianchi_LezioniDiGeometria_1903}, special cases of Lie-Darboux transformation \cite{burstall_PolynomialConservedQuantities_2019}, Eisenhart transformation \cite{eisenhart_TransformationsSurfacesGuichard_1914}, and Bianchi-Darboux transformation \cite{musso_BianchiDarbouxTransformLisothermic_2000}, respectively.

Our work will answer that indeed the permutability of these transformations gives rise to the discrete counterparts.
However, instead of checking this relationship for each surface class named here, we will use a uniform argument stemming from the integrable reduction of the case of isothermic surfaces, where the discrete counterparts are given a well-known interpretation based on the Bianchi quadrilaterals of Darboux transformations of isothermic surfaces.

The machinery that allows us to perform such integrable reduction is the following property $\mathcal{P}$: isothermic surfaces whose associated $1$-parameter family of flat connections admit certain \emph{polynomial conserved quantities} of degree $d$; otherwise said, the property $\mathcal{P}$ is that the surfaces are \emph{special isothermic surfaces of type $d$} \cite{burstall_SpecialIsothermicSurfaces_2012}.
In this case, type $\mathcal{T}$ transforms will be Darboux transforms that do not increase the degree of the polynomial conserved quantities, which we will refer to as \emph{B\"acklund transforms}.

We will show that discrete special isothermic surfaces of type $d$ \cite{burstall_DiscreteSurfacesConstant_2014, burstall_DiscreteSpecialIsothermic_2015} arise from the Bianchi quadrilaterals of B\"acklund transformations.
Interpreting the surface classes under consideration within this framework will allow us to answer positively for each case immediately.

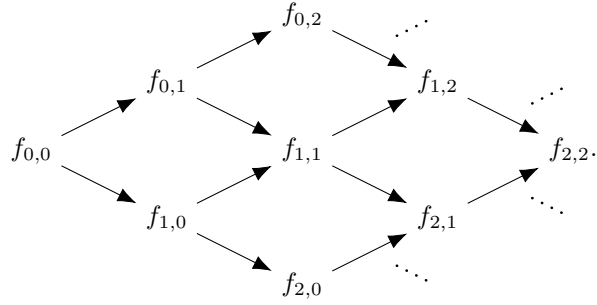
\begin{figure}
	\centering
		\begin{tikzpicture}[scale=0.9]
			\node (B) at (0,0) {$f_{0,0}$};
			\node (E) at (2,1) {$f_{0,1}$};
			\node (F) at (2,-1) {$f_{1,0}$};
			\node (H) at (4,2) {$f_{0,2}$};
			\node (I) at (4,0) {$f_{1,1}$};
			\node (J) at (4,-2) {$f_{2,0}$};
			\node (K) at (6,1) {$f_{1,2}$};
			\node (L) at (6,-1) {$f_{2,1}$};
			\node (M) at (8,0) {$f_{2,2}.$};
			\node (R) at (5,1.5) {$\phantom{f_{11}}$};
			\node (S) at (6,2) {};
			\node (T) at (7,0.5) {$\phantom{f_{11}}$};
			\node (U) at (8,1) {};
			\node (V) at (5,-1.5) {$\phantom{f_{11}}$};
			\node (W) at (6,-2) {};
			\node (X) at (7,-0.5) {$\phantom{f_{11}}$};
			\node (Y) at (8,-1) {};
			\path[-{Latex[scale=1.5]}]
				(B) edge (E)
				(B) edge (F)
				(E) edge (H)
				(E) edge (I)
				(F) edge (I)
				(F) edge (J)
				(H) edge (K)
				(I) edge (K)
				(I) edge (L)
				(J) edge (L)
				(K) edge (M)
				(L) edge (M);
			\path[line width=1pt, line cap=round, dash pattern=on 0pt off 4\pgflinewidth]
				(R) edge (S)
				(T) edge (U)
				(V) edge (W)
				(X) edge (Y);
		\end{tikzpicture}
	\caption{Grid from permutability of transforms of smooth surfaces}
	\label{fig:intro}
\end{figure}

\textbf{Acknowledgement.} We thank Fran Burstall and Udo Hertrich-Jeromin for helpful discussions, and the organizers of the program ``Integrable Geometry: Smooth and Discrete'' held at MATRIX for the opportunity to clarify the point made here. We gratefully acknowledge the partial support from: Japan Society for the Promotion of Science (Grant-in-Aid for Scientific Research (C) 23K03091) and NRF of Korea funded by MSIT (Korea-Austria Scientific and Technological Cooperation RS-2025-1435299).

\section{Permutability as a bridge between smooth and discrete surfaces}

\subsection{Smooth surfaces}\label{sec:smooth}

For non-negative integers $p$ and $q$, let us take $(p+q+2)$-dimensional pseudo-Euclidean space $\mathbb{R}^{p+1,q+1}$ with 
(pseudo-)metric denoted by $( \cdot , \cdot )$, of signature $(p+1,q+1)$, containing a $(p+q+1)$-dimensional light cone $\mathcal{L}$.
A $(p+q)$-dimensional (constant sectional curvature) space form $M$ with signature $(p,q)$ is then given by
	\[ M=\{ X \in \mathcal{L} : ( X,\mathfrak{q} ) = -1 \} \]
for some nonzero vector $\mathfrak{q} \in \mathbb{R}^{p+1,q+1}$ called the \emph{space form vector}.  
Only the direction of $X \in \mathcal{L}$ is needed for it to determine a point in $M$, and so it is convenient to projectivize $\mathcal{L}$ to $\mathbb{P}(\mathcal{L}) \subset \mathbb{P}(\mathbb{R}^{p+1,q+1})$, giving us the conformal $(p,q)$-sphere.
Consider a surface in the conformal $(p,q)$-sphere
	\[
		f :\Sigma^2 \to \mathbb{P}(\mathcal{L})
	\] 
which we also can view as a null line bundle in the trivial bundle $\Sigma^2 \times  \mathbb{R}^{p+1,q+1}$.

A surface $\mathfrak{f}$ in the space form $M$ determined by the space form vector $\mathfrak{q}$ is recovered locally from a surface $f$ in the conformal $(p,q)$-sphere via
	\[
		\mathfrak{f} = f \cap M,
	\]
wherever $f$ is not orthogonal to $\mathfrak{q}$.

\subsubsection{Isothermic surfaces and Darboux transformations}
A surface $f:\Sigma^2 \to \mathbb{P}(\mathcal{L})$ is called isothermic when it admits conformal curvature line coordinates around every non-umbilical point.
As we are dealing only with local theory of isothermic surfaces, we will assume throughout that $f$ is umbilic-free on $\Sigma^2$, and admits conformal curvature line coordinates on its domain.

Then a surface is isothermic exactly when there exists a $\wedge^2 \mathbb{R}^{p+1,q+1}$-valued non-zero closed $\eta \in \Omega^1(f \wedge f^\perp)$ defined on the tangent bundle (see \cite{burstall_ConformalSubmanifoldGeometry_2010, burstall_SpecialIsothermicSurfaces_2012}).
Here, $f^\perp$ refers to the subbundle perpendicular to the null line bundle given by $f$, and we are identifying
	\[
		\wedge^2 \mathbb{R}^{p+1,q+1} \cong \mathfrak{o}(p+1,q+1).
	\]
Locally, the closedness of the $1$-form $\eta$ is equivalent to the flatness of the (metric) connection
	\[
		\Gamma(t) = \dif{} + t \eta
	\]
for any $t \in \mathbb{R}$ defined on the trivial bundle.
For this reason, we call $\{\Gamma(t) : t \in \mathbb{R}\}$ the associated family of flat connections of $f$.

Under these settings, Darboux transformations of isothermic surfaces can be given as follows:
\begin{definition}[{\cite[Definition~5.4.8]{hertrich-jeromin_IntroductionMobiusDifferential_2003}}]
	Let $f, \hat{f}: \Sigma^2 \to \mathbb{P}(\mathcal{L})$ be a pair of isothermic surfaces into the conformal $(p,q)$-sphere, and let $\Gamma(t)$ denote the associated family of flat connections of $f$.
	If $\hat{f}: \Sigma^2 \to \mathbb{P}(\mathcal{L})$ is $\Gamma(\mu)$-parallel, that is,
		\[
			\Gamma(\mu) \hat f \parallel \hat f,
		\]
	then $f$ and $\hat{f}$ are called a \emph{Darboux pair} with respect to parameter $\mu$.
	In this case, one surface is called a \emph{Darboux transform} of the other surface with respect to the parameter $\mu$.
\end{definition}
Then the respective associated families of flat connections are related by a gauge transformation, denoted by $\bullet$,
	\[
		\dif{} + t \hat \eta = \hat \Gamma(t) = \Gamma_f^{\hat f}(1-\tfrac{t}{\mu}) \bullet \Gamma(t)  := \Gamma_f^{\hat f}(1-\tfrac{t}{\mu}) \circ \Gamma(t) \circ \left(\Gamma_f^{\hat f}(1-\tfrac{t}{\mu})\right)^{-1}
	\]
where $\Gamma_{f}^{\hat f}(1-\tfrac{t}{\mu})$ is the Lorentz boost \cite{burstall_IsothermicSubmanifoldsSymmetric_2011}
	\[
		\Gamma_{f}^{\hat f}(1-\tfrac{t}{\mu}) Y = 
			\begin{cases}
				(1-\frac{t}{\mu}) Y &\text{for $Y \in \hat f$,}\\
				(1-\frac{t}{\mu})^{-1} Y &\text{for $Y \in f$,}\\
				Y &\text{for $Y \perp \hat f,f$.}
			\end{cases}
	\]
Generally, $\hat{f}$ and $f$ will not be orthogonal; however, for the indefinite case of $q > 0$, one can choose the spectral parameter $\mu = \infty$ to obtain an \emph{isotropic Darboux transformation}:
\begin{definition}[{cf.\ \cite[Definition~4.4]{burstall_DiscreteOmeganetsGuichard_2023}, \cite[Lemma~4.5.1]{clarkeIntegrabilitySubmanifoldGeometry2012}}]
	Let $f,\hat{f} : \Sigma^2 \to \mathbb{P}(\mathcal{L})$ be a pair of isothermic surfaces with respective associated closed $1$-forms $\eta$ and $\hat{\eta}$.
	Then they are called an \emph{isotropic Darboux pair} if there exists some $\tau \in \Gamma(f \wedge \hat{f})$ such that
		\[
			\hat{\eta} = \eta + \dif{\tau}.
		\]
	Otherwise said, denoting the respective associated $1$-parameter families of flat connections of $f$ and $\hat{f}$ as $\Gamma(t)$ and $\hat{\Gamma}(t)$, we have
		\[
			\hat{\Gamma}(t) = \exp(t \tau) \bullet \Gamma(t).
		\]
\end{definition}

Referring to two successive Darboux transformations as two-step Darboux transformations, we know that such two-step Darboux transformations with finite spectral parameters permute.
Likewise, isotropic Darboux transformations and Darboux transformations with finite spectral parameter also permute.
\begin{theorem}[{cf.\ \cite[Proposition~4.16]{burstall_DiscreteOmeganetsGuichard_2023}, \cite[Proposition~4.5.5]{clarkeIntegrabilitySubmanifoldGeometry2012}}]\label{thm:permiso}
	Suppose that $f^+$ is an isothermic surface.
	If $\hat{f}^+$ is a Darboux transformation of $f^+$ with finite spectral parameter $\mu$, while $f^-$ is an isotropic Darboux transformation of $f^+$, then there uniquely exists a fourth surface $\hat{f}^-$ that is simultaneously a Darboux transformation of $f^-$ with finite spectral parameter $\mu$, and an isotropic Darboux transformation of $\hat{f}^+$.
\end{theorem}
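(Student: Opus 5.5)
The plan is to build $\hat f^-$ by transporting the Darboux transform along the isotropic gauge, and then to check both Darboux relations using the gauge formulas stated above. Let $\eta^+$ be the closed $1$-form of $f^+$, with flat connections $\Gamma^+(t)=\dif{}+t\eta^+$. The isotropic pair gives $\tau\in\Gamma(f^+\wedge f^-)$ with $\eta^-=\eta^++\dif\tau$, so that $\Gamma^-(t)=\exp(t\tau)\bullet\Gamma^+(t)$. The Darboux pair gives that $\hat f^+$ is $\Gamma^+(\mu)$-parallel.

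\textbf{Existence.} I define
\[
	\hat f^- := \exp(\mu\tau)\hat f^+ .
\]
Since $\tau\in f^+\wedge f^-$ with $f^\pm$ null, $\tau$ is a nilpotent skew endomorphism. Hence $\exp(\mu\tau)=1+\mu\tau+\tfrac{\mu^2}{2}\tau^2$ is orthogonal, and $\hat f^-$ is a null line bundle. Because $\exp(\mu\tau)$ intertwines $\Gamma^+(\mu)$ and $\Gamma^-(\mu)$, the bundle $\hat f^-$ is $\Gamma^-(\mu)$-parallel. It is therefore a Darboux transform of $f^-$ with parameter $\mu$, and in particular isothermic. This uses the standard fact that a $\Gamma(\mu)$-parallel null line bundle is isothermic, or equivalently the gauge formula given before the definition of isotropic pairs.

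\textbf{Isotropic relation.} Next I must show that $\hat f^+$ and $\hat f^-$ form an isotropic Darboux pair. Write $R^+=\Gamma_{f^+}^{\hat f^+}(1-\tfrac t\mu)$ and $R^-=\Gamma_{f^-}^{\hat f^-}(1-\tfrac t\mu)$. Then
\[
	\hat\Gamma^-(t)=R^-\exp(t\tau)(R^+)^{-1}\bullet\hat\Gamma^+(t).
\]
I claim that
\[
	R^-\exp(t\tau)(R^+)^{-1}=\exp(t\hat\tau)
\]
for some $\hat\tau\in\Gamma(\hat f^+\wedge\hat f^-)$. Given the claim, $\hat\eta^-=\hat\eta^++\dif\hat\tau$ follows by comparing the coefficients of $t$.

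To prove the claim, call the left side $L(t)$. It is a rational function of $t$ with at most a simple pole at $t=\mu$ and a pole at $t=\infty$, and $L(0)=1$. The proof then proceeds in three steps.
\begin{enumerate}
	\item Show that $L(t)$ has no pole at $t=\mu$. The residue terms cancel because $\exp(\mu\tau)$ maps $\hat f^+$ to $\hat f^-$, and it maps $(\hat f^+)^\perp$ onto $(\hat f^-)^\perp$.
	\item Conclude that $L$ is polynomial in $t$. Check, by evaluating on $\hat f^\pm$ and on their orthogonal complement, that $L$ acts as $1+t\hat\tau+\tfrac{t^2}{2}\hat\tau^2$.
	\item Read off $\hat\tau$ explicitly. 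I expect
	\[
		\hat\tau=\frac{\hat f^+\wedge\hat f^-}{(\hat f^+,\hat f^-)}\cdot c,
	\]
	with normalisation $c$ fixed from $\tau$, or equivalently $\hat\tau=R^+(\mu\text{-limit})\,\tau\,(\cdots)^{-1}$.
\end{enumerate}

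\textbf{Uniqueness.} Any $\hat f^-$ that is a $\mu$-Darboux transform of $f^-$ must be $\Gamma^-(\mu)$-parallel. Its gauge image $\exp(-\mu\tau)\hat f^-$ is then $\Gamma^+(\mu)$-parallel. The second condition, being an isotropic transform of $\hat f^+$, forces this parallel bundle to equal $\hat f^+$ itself: evaluating the gauge relation at $t=\mu$ shows that $\exp(-\mu\tau)$ sends $\hat f^-$ into $\hat f^+$.

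\textbf{Main obstacle.} The hardest step is the pole cancellation at $t=\mu$ together with identifying $\hat\tau$ in $\hat f^+\wedge\hat f^-$. That identification needs careful linear algebra on the decomposition into $\hat f^\pm$, $f^\pm$ and their complements. I would first try it pointwise in an adapted null frame.
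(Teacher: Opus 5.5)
The paper does not prove this statement itself; it quotes it from Proposition~4.16 of Burstall--Cho--Hertrich-Jeromin--Pember--Rossman and from Clarke's thesis. So your argument has to be judged on its own.

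\textbf{Existence is essentially correct.} The construction $\hat f^-=\exp(\mu\tau)\hat f^+$ is right, and so is your claim $R^-(t)\exp(t\tau)=\exp(t\hat\tau)R^+(t)$.

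\textbf{Uniqueness has a genuine gap.} Take a competitor $\tilde f^-$, with $\tilde R^-(t)=\Gamma_{f^-}^{\tilde f^-}(1-\tfrac t\mu)$ and $\tilde\tau\in\Gamma(\hat f^+\wedge\tilde f^-)$. The two hypotheses only give an equality of \emph{connections}:
\[
\tilde R^-(t)\exp(t\tau)\bullet\Gamma^+(t)=\tilde\Gamma^-(t)=\exp(t\tilde\tau)R^+(t)\bullet\Gamma^+(t).
\]
Hence the two gauges differ by $g(t)=R^+(t)^{-1}\exp(-t\tilde\tau)\tilde R^-(t)\exp(t\tau)$, which is merely a $\Gamma^+(t)$-parallel endomorphism field. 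Nothing in your argument forces $g\equiv1$. Your step ``evaluating at $t=\mu$'' requires the gauge identity $\tilde R^-(t)\exp(t\tau)=\exp(t\tilde\tau)R^+(t)$ itself. Since $R^\pm(\mu)$ are singular, that step really means multiplying by $1-\tfrac t\mu$, letting $t\to\mu$, and comparing the kernels $\exp(-\mu\tau)(\tilde f^-)^\perp$ and $(\hat f^+)^\perp$ of the limiting projections. Without the identity, you only know that $\exp(-\mu\tau)\tilde f^-$ and $\hat f^+$ are two $\Gamma^+(\mu)$-parallel null line bundles, and there is a whole $(p+q)$-parameter family of those. You still need either an argument that they agree at one point, or one that kills the extra parallel factor $g(t)$. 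As written, uniqueness is unproved.

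\textbf{Errors in the existence sketch.} Several mistakes would derail your steps 2--3 as written.
\begin{itemize}
\item Nullity of $f^\pm$ does not make $\tau$ nilpotent: when $(f^+,f^-)\neq0$, $f^+\wedge f^-$ generates boosts. What you need is $f^+\perp f^-$, which is part of being an isotropic pair; then $\tau^2=0$.
\item For the same reason $(\exp(\mu\tau)\hat\sigma^+,\hat\sigma^+)=\mu(\tau\hat\sigma^+,\hat\sigma^+)=0$, so $\hat f^+\perp\hat f^-$. Your guessed $\hat\tau$ therefore divides by $(\hat f^+,\hat f^-)=0$.
\item Evaluating on ``$\hat f^\pm$ and their orthogonal complement'' does not span the space, because the complement of the null plane $\hat f^+\oplus\hat f^-$ contains that plane.
\item A priori $L$ has a double pole at $t=\mu$, one from each boost, not a simple one.
\end{itemize}

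\textbf{How to verify the claim.} Write $\tau=\sigma^+\wedge\sigma^-$ and $\hat\sigma^-=\exp(\mu\tau)\hat\sigma^+$. A direct check in this frame shows that $L(t)$ fixes $\hat\sigma^\pm$ and $\{\sigma^+,\sigma^-,\hat\sigma^+\}^\perp$, and acts on $\sigma^\pm$ as $1+t\hat\tau$ with
\[
\hat\tau=\frac{\hat\sigma^+\wedge\hat\sigma^-}{\mu^2(\sigma^+,\hat\sigma^+)(\sigma^-,\hat\sigma^+)}.
\]
Orthogonality of $L(t)$ then handles the single remaining direction.

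\textbf{A needed nondegeneracy assumption.} The denominator shows you must also assume $(\hat f^+,f^-)\neq0$. Since $(\hat f^-,f^-)=(\hat f^+,f^-)$, this is exactly the condition that $\hat f^-$ is nowhere orthogonal to $f^-$. That is required before ``$\Gamma^-(\mu)$-parallel'' gives a genuine Darboux transform.
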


\subsubsection{Integrable reduction of isothermicity via polynomial conserved quantities}

We recall the definition of polynomial conserved quantities.
\begin{definition}[{\cite[Definition~2.1, Definition~2.3]{burstall_SpecialIsothermicSurfaces_2012}}]
For an isothermic surface $f : \Sigma^2 \to \mathbb{P}(\mathcal{L})$ with associated family of flat connections $\Gamma(t)$, let $p(t)$ be a parallel section of $\Gamma(t) $ that is polynomial in $t$ with coefficients $p^{(\iota)} : \Sigma^2 \to \mathbb{R}^{p+1,q+1}$, that is,
	\[
		p(t) = p^{(0)} + p^{(1)}t + p^{(2)}t^2 + \cdots + p^{(d)}t^d,
	\]
for some $d \in \mathbb{N}$, satisfying
	\[
		\Gamma(t) p(t) = 0.
	\]
Then $p(t)$ is called a \emph{polynomial conserved quantity}\footnote{The concept of polynomial conserved quantities originates from the notion of polynomial Killing fields introduced in \cite{burstall_HarmonicToriSymmetric_1993}.}  of $f$.
When an isothermic surface admits a polynomial conserved quantity of degree $d$, we say that the surface is a \emph{special isothermic surface of type $d$}.
\end{definition}

\begin{remark}
	When the degree $d$ of a polynomial conserved quantity $p(t)$ is zero, then $p(t)$ is called a constant conserved quantity.
	In fact, if $p(t)$ is a constant conserved quantity, then $p(t) = p^{(0)}$ is a constant vector.
	When $d = 1$, then $p(t)$ is called a linear conserved quantity.
\end{remark}
	
Thus, if $f$ is a special isothermic surface of type $d$ with polynomial conserved quantity $p(t)$, then 
	\begin{equation}\label{eq:pphat}
		\hat p(t) = \Gamma_f^{\hat f}(1-\tfrac{t}{\mu}) p(t)
	\end{equation} 
will be a conserved quantity of $\hat f$, which may not necessarily be polynomial in $t$.
In the case that $p(\mu) \perp \hat f$ at one point of $\Sigma^2$, then this is so at every point of $\Sigma^2$, since $p(\mu)$ and $\hat f$ are both parallel for $\Gamma(\mu)$, and $\hat p(t)$ becomes polynomial of degree at most $d$.
Conversely, if $\hat p(t)$ is a polynomial in $t$, then $p(\mu) \perp \hat f$ and $\text{degree}(\hat p(t)) \leq d$.
\begin{definition}[{\cite[Theorem~3.1]{burstall_SpecialIsothermicSurfaces_2012}}]
	Let $f, \hat{f}: \Sigma^2 \to \mathbb{P}(\mathcal{L})$ be a Darboux pair with respect to parameter $\mu$, and suppose $f$ is a special isothermic surface of type $d$ with polynomial conserved quantity $p(t)$.
	If $p(\mu) \perp \hat f$, then $\hat{f}$ is also a special isothermic surface of type $d$, referred to as a \emph{B\"acklund transform} of $f$ with respect to parameter $\mu$.
\end{definition}

\subsection{Discrete surfaces}
Let $D^2 \subset  \mathbb{Z}^2$ be simply connected in the sense of \cite[Section~2.3]{burstall_DiscreteOmeganetsGuichard_2023} with $(m,n) \in D^2$.
In our setting, it is sufficient to consider the collection of the vertices of unit squares where those squares form a simply connected set in $\mathbb{R}^2$.
We will often use $i = (m,n)$, $j=(m+1,n)$, $k=(m+1,n+1)$ and $\ell=(m,n+1)$ to denote the vertices around an elementary quadrilateral $(ijk\ell)$.

Consider a discrete map $F: D^2 \to \mathbb{P}(\mathcal{L})$ as a discrete surface.
Implicitly understood in the term ``surface'' here is that $F$ is nondegenerate in all ways needed -- in particular, adjacent values of $F$ are not orthogonal.  

Viewing discrete isothermic surfaces as Bianchi quadrilaterals of Darboux transformations of smooth isothermic surfaces, there are numerous equivalent conditions for defining the isothermicity of $F$; here, we will take the gauge-theoretic approach, borrowing the notions of discrete vector bundle theory from \cite{burstall_NotesTransformationsIntegrable_2017, burstall_DiscreteOmeganetsGuichard_2023}, and require that for some real-valued non-vanishing edge-labeling $m_{ij}$ defined on the set of unoriented edges (so that $m_{ij} = m_{\ell k}$ and $m_{i\ell} = m_{jk}$ on any elementary quadrilateral $(ijk\ell)$), the discrete connections defined on the trivial bundle $D^2 \times \mathbb{R}^{p+1,q+1}$ given by
	\[
		\Gamma(t)_{ji} := \Gamma_{F_i}^{F_j}(1-\tfrac{t}{m_{ij}})
	\]
are flat for all $t \in \mathbb{R}$, that is,
	\[
		\Gamma(t)_{kj}\Gamma(t)_{ji}=\Gamma(t)_{k \ell}\Gamma(t)_{\ell i}.
	\]
(See \cite[Theorem~4.14]{burstall_IsothermicSubmanifoldsSymmetric_2011}, \cite[Lemma~2.5]{burstall_DiscreteSurfacesConstant_2014}, or \cite[Theorem 4.5.29]{cho_DiscreteIsothermicSurfaces_2025}.)
\begin{definition}[cf.\ {\cite[Def.~3.1]{burstall_DiscreteSurfacesConstant_2014}}]
Let $F: D^2 \to \mathbb{P}(\mathcal{L})$ be a discrete isothermic surface with associated family of flat connections $\Gamma_{ji}(t)$ for any edge $(ij)$.
Then a polynomial $P(t)$ in $t$ with coefficients $P^{(\iota)} : D^2 \to \mathbb{R}^{p+1,q+1}$ is called a \emph{(discrete) polynomial conserved quantity} of $F$ if 
	\begin{equation}\label{eqn:edge}
		\Gamma(t)_{ji} P(t)_i = P(t)_j
	\end{equation}
on any edge $(ij)$.
We will refer to the condition \eqref{eqn:edge} as the \emph{degree $d$ edge property}, with the understanding that this naming implies both $P(t)_j$ and $P(t)_i$ are polynomial of degree $d$.  
\end{definition}
\begin{definition}[{\cite[Def.~3.12]{burstall_DiscreteSurfacesConstant_2014}}]
A discrete isothermic surface admitting a polynomial conserved quantity of degree $d$ is called a \emph{discrete special isothermic surface of type $d$}.
\end{definition}

\subsection{Main result}

A key observation for us is that \eqref{eq:pphat} is exactly the degree $d$ edge property in the case of a smooth surface and its B\"acklund transform (so that both $p(t)$ and $\hat p(t)$ are polynomial of degree $d$), which we discuss in the next lemma. 

\begin{lemma}\label{onlylem}
	A Darboux transform $\hat f$ of a (smooth) special isothermic surface of type $d$ is a B\"acklund transform if and only if it satisfies the degree $d$ edge property between corresponding points of $f$ and $\hat f$.  
\end{lemma}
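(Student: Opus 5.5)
The plan is to unwind both conditions in the statement until they become the same assertion about $\hat p(t)=\Gamma_f^{\hat f}(1-\tfrac{t}{\mu})p(t)$. Fix a special isothermic surface $f$ of type $d$ with polynomial conserved quantity $p(t)$, and a Darboux transform $\hat f$ with parameter $\mu$.

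First I interpret the degree $d$ edge property at corresponding points. Regard the pair $f(x),\hat f(x)$ as an edge $(ij)$ with $F_i=f(x)$, $F_j=\hat f(x)$ and edge label $m_{ij}=\mu$. The discrete connection is then $\Gamma(t)_{ji}=\Gamma_{f}^{\hat f}(1-\tfrac{t}{\mu})$ evaluated at $x$. Setting $P(t)_i=p(t)(x)$ and $P(t)_j=\hat p(t)(x)$, the relation \eqref{eqn:edge} is exactly \eqref{eq:pphat}. The content of the edge property is therefore the requirement that $p(t)$ and $\hat p(t)$ both be polynomial of degree $d$. Since $p(t)$ already is, the edge property holds if and only if $\hat p(t)$ is polynomial in $t$ of degree $d$. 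I read this pointwise at every $x$; by the parallelity argument below, one point suffices.

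Next I analyse when $\hat p(t)$ is polynomial. Decompose $p(t)=a(t)+b(t)+c(t)$ with $a(t)\in\hat f$, $b(t)\in f$ and $c(t)\perp f,\hat f$; this is possible because $f$ and $\hat f$ are non-orthogonal null lines. Concretely,
\[
a(t)=\frac{(p(t),f)}{(\hat f,f)}\hat f,\qquad b(t)=\frac{(p(t),\hat f)}{(f,\hat f)} f,
\]
using local lifts. Then
\[
\hat p(t)=(1-\tfrac{t}{\mu})a(t)+(1-\tfrac{t}{\mu})^{-1}b(t)+c(t).
\]
The first and third terms are polynomial, so $\hat p(t)$ is polynomial exactly when $(p(t),\hat f)$ is divisible by $(1-\tfrac{t}{\mu})$, that is, when $(p(\mu),\hat f)=0$. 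In that case the degree is at most $d$, which recovers the remark preceding the definition of B\"acklund transform. For the constancy in $x$ I use the fact already noted in the text: $(p(\mu),\hat f)=0$ at one point forces it everywhere, since both $p(\mu)$ and $\hat f$ are $\Gamma(\mu)$-parallel and $\Gamma(\mu)$ is metric. Combining the two steps, the edge property at corresponding points is equivalent to $p(\mu)\perp\hat f$, which is the definition of a B\"acklund transform.

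The main obstacle is the degree bookkeeping: the edge property demands degree exactly $d$, while the computation above only gives degree at most $d$. The top coefficient of $\hat p$ is $-\tfrac{1}{\mu}a^{(d)}$ plus the contributions of $c$ and of the quotient $b(t)/(1-\tfrac{t}{\mu})$. To handle this I would argue that $\hat f$ is special isothermic of type $d$ in the sense of the definition, namely that it admits a degree-$d$ polynomial conserved quantity. If the degree drops, I would adopt the convention (implicit in the paper, via ``B\"acklund transforms do not increase the degree'') that ``degree $d$'' in the edge property means degree at most $d$, and state this explicitly. Alternatively, one can check the leading coefficient $p^{(d)}-\tfrac{1}{\mu}(\ldots)$ and observe that the degree $d$ condition is symmetric under inverting the gauge $\Gamma_{\hat f}^{f}=\big(\Gamma_f^{\hat f}\big)^{-1}$. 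That symmetry shows $\deg p\le\deg\hat p$, giving equality.
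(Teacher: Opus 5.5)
Your proposal is correct and follows the same route as the paper. Evaluating \eqref{eq:pphat} at $x$ with $F_i=f(x)$, $F_j=\hat f(x)$, $m_{ij}=\mu$, $P(t)_i=p(t)|_x$ and $P(t)_j=\hat p(t)|_x$ gives exactly the edge relation, and the converse rests on the fact recorded in Section~\ref{sec:smooth} that $\hat p(t)$ is polynomial exactly when $p(\mu)\perp\hat f$. Your decomposition $p=a+b+c$ and the inverse-gauge argument for $\deg\hat p=d$ just supply details the paper leaves implicit.

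One small correction: in your decomposition, $-\tfrac{1}{\mu}a^{(d)}$ is the coefficient of $t^{d+1}$, not part of the degree-$d$ coefficient. The bound $\deg\hat p\le d$ therefore needs it to vanish, i.e.\ $p^{(d)}\perp f$. This follows from the $t^{d+1}$-coefficient $\eta\,p^{(d)}=0$ of $\Gamma(t)p(t)=0$, because $\eta$ is a non-zero form with values in $f\wedge f^\perp$.
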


\begin{proof}
	Suppose $\hat f$ is a B\"acklund transform of $f$ with spectral parameter $\mu$, and choose some $x \in \Sigma^2$, so that \eqref{eq:pphat} holds.
	Let us define
		\begin{gather*}
			F_i := f(x), \quad F_j := \hat{f}(x), \quad m_{ij} := \mu\\
			P(t)_i := p(t)\big|_x, \quad P(t)_j := \hat{p}(t)\big|_x.
		\end{gather*}
	Evaluating \eqref{eq:pphat} at $x$ now implies
		\[
			P(t)_j = \Gamma_{F_i}^{F_j}(1-\tfrac{t}{m_{ij}}) P(t)_i = \Gamma_{ji} P(t)_i,
		\]
	giving us the degree $d$ edge property between $f(x)$ and $\hat f(x)$.

	Conversely, if the degree $d$ edge property holds, so \eqref{eq:pphat} holds with $\hat p(t)$ polynomial (and so $p(\mu) \perp \hat f$), then this is exactly the condition for having a B\"acklund transform, as noted in Section \ref{sec:smooth}.   \qed
\end{proof}

\begin{theorem}\label{mainthm}
	For a smooth special isothermic surface of type $d$, images of a single point in the surface through a lattice of B\"acklund transforms will be a discrete special isothermic surface of type $d$.
\end{theorem}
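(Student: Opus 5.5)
We have a lattice of smooth special isothermic surfaces $f_{m,n}$, each carrying a polynomial conserved quantity $p_{m,n}(t)$ of degree $d$, with $p_{m,n}$ obtained from its neighbours by the gauge formula \eqref{eq:pphat}. Fix $x\in\Sigma^2$ and set
\[
F_{(m,n)}:=f_{m,n}(x), \qquad P(t)_{(m,n)}:=p_{m,n}(t)\big|_x .
\]
The plan is to verify three things in turn.

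\textbf{Step 1: edge labels.} In the Bianchi permutability of Darboux transforms, opposite edges of each quadrilateral carry the same spectral parameter. If $f_{m+1,n}$ is obtained from $f_{m,n}$ with parameter $\mu_m$, and $f_{m,n+1}$ from $f_{m,n}$ with parameter $\nu_n$, then permutability gives $f_{m+1,n+1}$ as a transform of $f_{m,n+1}$ with parameter $\mu_m$ and of $f_{m+1,n}$ with parameter $\nu_n$. So we set $m_{ij}=\mu_m$ on horizontal edges and $m_{i\ell}=\nu_n$ on vertical edges. This labelling is constant across opposite edges, as the discrete definition requires.

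\textbf{Step 2: isothermicity of $F$ (flatness of $\Gamma(t)_{ji}$).} For each elementary quadrilateral $(ijk\ell)$, the gauge relations give, as operators on the trivial bundle,
\[
\Gamma_{f_k}^{f_j}\!\left(1-\tfrac{t}{\nu}\right)\Gamma_{f_j}^{f_i}\!\left(1-\tfrac{t}{\mu}\right)\bullet\Gamma_i(t)
=\Gamma_k(t)
=\Gamma_{f_k}^{f_\ell}\!\left(1-\tfrac{t}{\mu}\right)\Gamma_{f_\ell}^{f_i}\!\left(1-\tfrac{t}{\nu}\right)\bullet\Gamma_i(t).
\]
Both composite gauges map $\Gamma_i(t)$ to $\Gamma_k(t)$. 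Their quotient $G(t)$ is therefore a $\Gamma_i(t)$-parallel (constant) gauge symmetry, rational in $t$, and the task is to show $G\equiv \mathrm{id}$. This is the standard content of Bianchi permutability: the fourth surface is characterised precisely so that the two dressing factors agree (e.g.\ \cite{burstall_IsothermicSubmanifoldsSymmetric_2011}). I would cite this rather than reprove it. If a proof is required, I would compare the eigenspace structure of both sides at $t=\mu$ and $t=\nu$, check that the poles and zeros coincide, and use normalisation at $t=0$, where both sides are the identity.

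Evaluating the agreed identity pointwise at $x$ gives $\Gamma(t)_{kj}\Gamma(t)_{ji}=\Gamma(t)_{k\ell}\Gamma(t)_{\ell i}$. Alternatively, one can quote directly the known fact that Bianchi quadrilaterals of Darboux transforms are discrete isothermic with these labels, as recalled in the introduction.

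\textbf{Step 3: the conserved quantity.} By Lemma~\ref{onlylem}, each edge of the lattice, being a Bäcklund pair, satisfies the degree $d$ edge property with $P$ as defined above. This makes $P(t)$ a discrete polynomial conserved quantity of degree $d$.

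The one subtlety is consistency: the conserved quantity $p_{1,1}$ reached via $f_{1,0}$ must agree with the one reached via $f_{0,1}$. This follows from the flatness in Step 2, since both are obtained by applying equal composite gauges to $p_{0,0}$. Polynomiality is preserved along every edge because each edge is a Bäcklund transform, so $P$ is well defined on all of $D^2$.

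\textbf{Main obstacle.} The hard step is Step 2: identifying the two composite gauge transformations around a quadrilateral, i.e.\ showing the constant symmetry $G(t)$ is trivial. Everything else follows from Lemma~\ref{onlylem} by evaluation at $x$.

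The isotropic case, with parameter $\infty$ and the permutability of Theorem~\ref{thm:permiso}, would be handled in the same way, with $\exp(t\tau)$ replacing the boost on those edges.
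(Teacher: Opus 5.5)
\textbf{Gap: you never show that the interior surfaces are B\"acklund transforms.} Your Step 3 assumes every edge of the lattice is a B\"acklund pair, but for the edges $(jk)$ and $(\ell k)$ this is exactly what has to be proved. Only $f_{m,0}$ and $f_{0,n}$ are chosen as B\"acklund transforms. Each remaining $f_k$ comes from Bianchi permutability of \emph{Darboux} transforms of $f_j$ and $f_\ell$, so a priori it need not be special isothermic of type $d$. What you need is $p_j(m_{i\ell})\perp f_k$ and $p_\ell(m_{ij})\perp f_k$. Without this, Lemma~\ref{onlylem} gives the edge property only on $(ij)$ and $(i\ell)$, and the lattice construction stalls at $f_{1,1}$.

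Two places in your write-up assume the conclusion. Your opening premise says each $f_{m,n}$ already carries a degree $d$ conserved quantity related to all its neighbours by \eqref{eq:pphat}. Your sentence ``polynomiality is preserved along every edge because each edge is a B\"acklund transform'' says the same thing. Your consistency remark does not supply it either: flatness gives $\Gamma(t)_{kj}P(t)_j=\Gamma(t)_{k\ell}P(t)_\ell$, but not that this common value is polynomial. The paper closes exactly this step by citing \cite[Theorem~3.6]{burstall_SpecialIsothermicSurfaces_2012}: $f_k$ is a simultaneous B\"acklund transform of $f_j$ and $f_\ell$. So the real content of the proof lies here. It does not lie in your Step 2, which both you and the paper simply cite from \cite{burstall_IsothermicSubmanifoldsSymmetric_2011}; you misidentified the main obstacle.

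\textbf{How to close it with your own tools.} Set $P(t)_k:=\Gamma(t)_{kj}P(t)_j=\Gamma(t)_{k\ell}P(t)_\ell$; the two expressions agree by flatness. $\Gamma(t)_{kj}$ acts by $1-\tfrac{t}{m_{i\ell}}$, $(1-\tfrac{t}{m_{i\ell}})^{-1}$ and $1$, so the first expression is rational in $t$ with at most a simple pole at $t=m_{i\ell}$. Likewise the second has at most a pole at $t=m_{ij}$. Since $m_{ij}\neq m_{i\ell}$ (as permutability requires), $P(t)_k$ has no poles and is polynomial.

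Its degree is at most $d$. The top coefficient of $P(t)_j$ is orthogonal to $F_j$, as the top coefficient of any polynomial conserved quantity of $f_j$ is. Hence the $F_k$-component $(1-\tfrac{t}{m_{i\ell}})\frac{(P(t)_j,F_j)}{(F_k,F_j)}F_k$ does not raise the degree.

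This gives the degree $d$ edge property on $(jk)$ and $(\ell k)$ directly at $x$. Applied at every point, it also recovers the smooth statement that $f_k$ is a B\"acklund transform of $f_j$ and $f_\ell$.

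A minor notational point: with the paper's convention $\hat\Gamma=\Gamma_f^{\hat f}\bullet\Gamma$, the gauges in your Step 2 display should read $\Gamma_{f_j}^{f_k}\Gamma_{f_i}^{f_j}$ and $\Gamma_{f_\ell}^{f_k}\Gamma_{f_i}^{f_\ell}$.
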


\begin{proof}
	Let $f_i$ denote the original given special isothermic surface of type $d$, and let $f_j, f_\ell$ be the two B\"acklund transforms of $f_i$ with spectral parameter $m_{ij}$ and $m_{i\ell}$, respectively.
	Let the fourth surface $f_k$ be given via the permutability of Darboux transformations for isothermic surfaces, so that $f_k$ is simultaneously a Darboux transform of $f_j$ and $f_\ell$, with spectral parameters $m_{i\ell}$ and $m_{ij}$.
	Defining a discrete map via $F_* := f_*(x)$ at any fixed point $x\in \Sigma^2$, we know that $F$ satisfies the condition for being discrete isothermic on the elementary quadrilateral $(ijk\ell)$ by \cite[Lemma~4.7]{burstall_IsothermicSubmanifoldsSymmetric_2011} (see also \cite[Lemma 3.6.49]{cho_DiscreteIsothermicSurfaces_2025}), and Lemma~\ref{onlylem} implies that $F$ further satisfies the degree $d$ edge property on edges $(ij)$ and $(i\ell)$.
	
	Now, as proven in \cite[Theorem~3.6]{burstall_SpecialIsothermicSurfaces_2012}, $f_k$ must also be a special isothermic surface of type $d$, so that $f_k$ is a simultaneous B\"acklund transform of $f_j$ and $f_\ell$.
	Thus, $F$ also satisfies the degree $d$ edge property on edges $(jk)$ and $(\ell k)$, and $F$ is a discrete special isothermic surface of type $d$. \qed
\end{proof}

\section{Applications to various surface classes}\label{sec:three}

Theorem~\ref{mainthm} implies that the discretization principle using Bianchi quadrilaterals of permutability applies to any smooth and discrete surface classes admitting a characterization via polynomial conserved quantities.
Here, we apply this principle to some well-known examples.

\subsection{Integrable reductions for isothermic surfaces to constant mean curvature surfaces}
Let us first focus on the case $p = 3$ and $q = 0$, so that we are interested in isothermic surfaces in the conformal $3$-sphere.
Smooth special isothermic surfaces of type $1$ admitting linear conserved quantities $p(t) = \mathfrak{q} + t Y$ with normalization $\langle Y,Y\rangle = 1$ have a characterization as constant mean curvature (cmc) $H = -\langle \mathfrak{q}, Y\rangle$ surfaces in a $3$-dimensional Riemannian space form $M$ determined by the space form vector $\mathfrak{q}$ \cite[Proposition~2.5]{burstall_SpecialIsothermicSurfaces_2012} (see also \cite[Theorem~3.8.66]{cho_DiscreteIsothermicSurfaces_2025}).
Then a B\"acklund transform $\hat{f}$ of $f$ is also a cmc $H$ surface in the same space form $M$ with the same constant mean curvature, which we refer to as a CMC Darboux transform.
This transformation is equivalent to the classical Bianchi-B\"acklund transformation for cmc surfaces \cite{choSimpleFactorDressings2019, hertrich-jerominRemarksDarbouxTransform1997, kobayashiCharacterizationsBianchiBacklundTransformations2005} when the space form is Euclidean space $\mathbb{R}^3$.

Discrete cmc surfaces in space forms also admit a characterization as discrete special isothermic surfaces of type $1$ \cite[Theorem~5.5]{burstall_DiscreteSurfacesConstant_2014} (see also \cite[Theorem~4.10.67]{cho_DiscreteIsothermicSurfaces_2025}).
Therefore, we conclude:
\begin{corollary}
	Discrete constant mean curvature $H$ surfaces in $3$-dimensional Riemannian space forms of constant sectional curvature are obtained as Bianchi quadrilaterals of CMC Darboux transformations.
	Discrete constant mean curvature $H$ surfaces in Euclidean $3$-space are obtained as Bianchi quadrilaterals of Bianchi-B\"acklund transformations.
\end{corollary}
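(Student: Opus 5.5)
The statement to be proved is the Corollary. The plan is to combine Theorem~\ref{mainthm} with the two known characterizations of smooth and discrete cmc surfaces as special isothermic surfaces of type $1$.

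\emph{Step 1 (smooth side).} Start with a smooth cmc $H$ surface $\mathfrak{f}$ in the space form $M$ determined by $\mathfrak{q}$. By \cite[Proposition~2.5]{burstall_SpecialIsothermicSurfaces_2012}, its lift $f$ is special isothermic of type $1$, with linear conserved quantity $p(t)=\mathfrak{q}+tY$, where $(Y,Y)=1$ and $H=-(\mathfrak{q},Y)$. A CMC Darboux transform is, by definition, a B\"acklund transform of $f$. Equation \eqref{eq:pphat} gives $\hat p(t)=\Gamma_f^{\hat f}(1-\tfrac{t}{\mu})p(t)$. Its constant term is $\mathfrak{q}$, because the boost equals the identity at $t=0$. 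I then check that its linear coefficient $\hat Y$ again satisfies $(\hat Y,\hat Y)=1$. This follows by expanding $(\hat p(t),\hat p(t))=(p(t),p(t))$, which holds since the boost is orthogonal, and comparing $t^2$ coefficients. So the normalization and the space form vector are preserved, and every surface in the lattice has the same $H$.

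\emph{Step 2 (lattice).} Choose CMC Darboux transforms $f_{m,0}$ and $f_{0,n}$ along the axes. Theorem~\ref{mainthm}, via \cite[Theorem~3.6]{burstall_SpecialIsothermicSurfaces_2012}, shows that each fourth surface produced by permutability is again a B\"acklund transform. So the whole grid $f_{m,n}$ consists of CMC Darboux transforms. Fixing $x\in\Sigma^2$, Theorem~\ref{mainthm} says $F_{m,n}=f_{m,n}(x)$ is discrete special isothermic of type $1$. Its discrete conserved quantity is $P(t)_{m,n}=p_{m,n}(t)|_x$, which has the form $\mathfrak{q}+tY_{m,n}$ with $(Y_{m,n},Y_{m,n})=1$.

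\emph{Step 3 (discrete side).} Apply \cite[Theorem~5.5]{burstall_DiscreteSurfacesConstant_2014}: a discrete special isothermic net of type $1$ with linear conserved quantity of this normalized form is a discrete cmc $H$ surface in $M$. Conversely, every discrete cmc surface arises this way, since its edge labels, initial rows and conserved quantity can be matched by choosing the smooth transforms appropriately. Finally, in the case $M=\mathbb{R}^3$, I identify CMC Darboux transforms with Bianchi--B\"acklund transforms using \cite{choSimpleFactorDressings2019, hertrich-jerominRemarksDarbouxTransform1997, kobayashiCharacterizationsBianchiBacklundTransformations2005}. This gives the second sentence of the Corollary.

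The main obstacle is the bookkeeping in Step 3. I must ensure the discrete characterization uses exactly the same normalization of $P(t)$ and the same sign convention for $H$ as the smooth one. In particular, I need to confirm that the constant term of $P(t)$ is the space form vector $\mathfrak{q}$ at every vertex, not merely parallel to it. Step 1's invariance computation is meant to settle exactly this point.
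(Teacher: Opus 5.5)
Your proposal is correct and follows the paper's own route: Theorem~\ref{mainthm} combined with the smooth characterization \cite[Proposition~2.5]{burstall_SpecialIsothermicSurfaces_2012}, the discrete characterization \cite[Theorem~5.5]{burstall_DiscreteSurfacesConstant_2014}, and the cited identification with Bianchi--B\"acklund transforms in $\mathbb{R}^3$; your Step~1 orthogonality computation only makes explicit the paper's assertion that B\"acklund transforms keep $\mathfrak{q}$, the normalization $(Y,Y)=1$, and $H$. The converse you add in Step~3 (that every discrete cmc net arises this way) is not part of the paper's argument and is only gestured at in yours; to make it rigorous you would need a smooth seed with $f_{0,0}(x)=F_{0,0}$ and matching $Y$ at $x$, the fact that a Darboux transform is fixed by its value at $x$, the observation that polynomiality in the edge property forces the B\"acklund condition $(P(m_{ij})_i,F_j)=0$, and uniqueness of the fourth vertex of a quadrilateral given three vertices and the labels.
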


\subsection{$\Omega$-surfaces and their integrable reductions}
Now let $p = 3$ and $q = 1$ so that the projectivized light cone $\mathbb{P}(\mathcal{L})$ is the \emph{Lie quadric}, the space of oriented spheres.
Then $Z := \{ \text{lines in the Lie quadric}\}$ represents the set of contact elements, and becomes a contact manifold \cite{cecilLieSphereGeometry2008}.
Legendre maps are given as maps $\Lambda : \Sigma^2 \to Z$ viewed as rank $2$ null subbundles of the trivial bundle $\Sigma^2 \times \mathbb{R}^{4,2}$, and the surface $\mathfrak{f}$ in the space form $M$ can be recovered by taking sections $\mathfrak{f}$ of $\Lambda$ so that $\mathfrak{f}$ takes values in $M \cap \mathfrak{p}^\perp$ for some choice of the \emph{point sphere complex} $\mathfrak{p} \in \mathbb{R}^{4,2}$ with $\mathfrak{p} \perp \mathfrak{q}$ and $\mathfrak{p} \neq \mathfrak{q}$.

An $\Omega$-surface $\Lambda : \Sigma^2 \to Z$ is a Legendre immersion such that $\Lambda$ is spanned by an isotropic Darboux pair $f^\pm : \Sigma^2 \to \mathbb{P}(\mathcal{L})$ \cite{demoulinSurfacesOmega1911, demoulinSurfacesRSurfaces1911, demoulinSurfacesRSurfaces1911a, pember_LieApplicableSurfaces_2020}.
Therefore an $\Omega$-surface is a surface enveloped by \emph{a pair of isothermic sphere congruences}.
The \emph{Lie-Darboux transformation}  \cite{pember_LieApplicableSurfaces_2020}  with spectral parameter $\mu$ of an $\Omega$-surface $\Lambda = f^+ \oplus f^-$ is then induced from Darboux transformations of isothermic sphere congruences, given by the permutability between isotropic Darboux transformation and Darboux transformation with finite spectral parameter (Theorem~\ref{thm:permiso}).

A discrete $\Omega$-surface is characterized as the span of an isotropic Darboux pair of (discrete) isothermic sphere congruences in \cite[Theorem~6.2]{burstall_DiscreteOmeganetsGuichard_2023}, allowing us to deduce the following.
\begin{theorem}
	Discrete $\Omega$-surfaces are obtained as Bianchi quadrilaterals of Lie-Darboux transformations.
\end{theorem}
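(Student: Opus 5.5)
The plan is to reduce the statement to two ingredients: the discrete characterization of $\Omega$-nets in \cite[Theorem~6.2]{burstall_DiscreteOmeganetsGuichard_2023}, and Theorem~\ref{mainthm} (really its isothermic part, the case of Bianchi quadrilaterals of Darboux transforms together with \cite[Lemma~4.7]{burstall_IsothermicSubmanifoldsSymmetric_2011}).

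Fix an $\Omega$-surface $\Lambda = f^+ \oplus f^-$, with $f^\pm$ an isotropic Darboux pair of isothermic sphere congruences. Build a lattice of Lie-Darboux transforms $\Lambda_{m,n}$. By definition and Theorem~\ref{thm:permiso}, each step $\Lambda_i \to \Lambda_j$ with parameter $m_{ij}$ is realised as follows. Choose a Darboux transform $\hat f^+$ of $f^+$ with parameter $m_{ij}$. Let $\hat f^-$ be the unique surface that is simultaneously a parameter-$m_{ij}$ Darboux transform of $f^-$ and an isotropic Darboux transform of $\hat f^+$. Then set $\Lambda_j = \hat f^+ \oplus \hat f^-$. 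Permutability of Lie-Darboux transforms (which I would take from \cite{pember_LieApplicableSurfaces_2020} or \cite{burstall_DiscreteOmeganetsGuichard_2023}) gives the fourth surface $\Lambda_k$.

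The first main step is to check compatibility: $\Lambda_k$ should be spanned by $f^+_k$ and $f^-_k$, where $f^\pm_k$ are the Bianchi permutability completions of the quadrilaterals $f^\pm_i, f^\pm_j, f^\pm_\ell$. This uses uniqueness in Theorem~\ref{thm:permiso} and in the Bianchi permutability theorem, since both constructions produce a Darboux transform with the right parameters. So we get two lattices $f^\pm_{m,n}$ of finite-parameter Darboux transforms, with the same edge labels $m_{ij}$. At each vertex, $f^+_{m,n}$ and $f^-_{m,n}$ form an isotropic Darboux pair.

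Now fix $x\in\Sigma^2$ and set $F^\pm_{m,n}=f^\pm_{m,n}(x)$. By \cite[Lemma~4.7]{burstall_IsothermicSubmanifoldsSymmetric_2011}, each $F^\pm$ is a discrete isothermic net with edge labelling $m_{ij}$.

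The remaining step, which I expect to be the main obstacle, is to show that $F^+$ and $F^-$ form a discrete isotropic Darboux pair in the sense of \cite{burstall_DiscreteOmeganetsGuichard_2023}. The smooth condition $\hat\Gamma(t)=\exp(t\tau)\bullet\Gamma(t)$ must turn into the discrete condition that, at each vertex, $\exp(t\tau_i)$ with $\tau_i\in F^+_i\wedge F^-_i$ intertwines the discrete connections $\Gamma^\pm(t)_{ji}$. I would try to prove this by combining the gauge relations. Along an edge, $\Gamma^+_j(t)=\Gamma_{f^+_i}^{f^+_j}(1-\tfrac{t}{m_{ij}})\bullet\Gamma^+_i(t)$, and similarly for the minus family. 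At each vertex, $\Gamma^-_*(t)=\exp(t\tau_*)\bullet\Gamma^+_*(t)$. Composing these, the two gauges $\exp(t\tau_j)\,\Gamma^{F^+_j}_{F^+_i}$ and $\Gamma^{F^-_j}_{F^-_i}\exp(t\tau_i)$ relate the same pair of flat connections. Both are rational in $t$ with the same pole structure, so their quotient is a $t$-dependent parallel gauge; a normalization at $t=0$ should force it to be trivial. Evaluating at $x$ then gives the discrete relation.

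Finally, \cite[Theorem~6.2]{burstall_DiscreteOmeganetsGuichard_2023} identifies $F^+\oplus F^-=\Lambda_{m,n}(x)$ as a discrete $\Omega$-surface.
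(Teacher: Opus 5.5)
Your route is the paper's. The paper, too, treats a Lie-Darboux transform of $\Lambda=f^+\oplus f^-$ as induced by finite-parameter Darboux transforms of $f^\pm$ via Theorem~\ref{thm:permiso}, reads the Bianchi quadrilaterals as discrete isothermic nets, and concludes with \cite[Theorem~6.2]{burstall_DiscreteOmeganetsGuichard_2023}; it leaves implicit that the point values $F^\pm$ form a discrete isotropic Darboux pair. Your argument for that step, however, does not work as written.

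The normalization at $t=0$ carries no information. Since $\exp(0)=I$ and $\Gamma_{f_i^\pm}^{f_j^\pm}(1)=I$, the quotient $C(t)$ of your two gauges is automatically the identity at $t=0$. A $\Gamma^+_i(t)$-parallel gauge that is trivial at $t=0$ need not be trivial for $t\neq 0$: for each fixed $t$ the parallel gauges form a copy of $\mathrm{O}(p+1,q+1)$. A Liouville-type argument would need $C$ to be regular at $t=m_{ij}$ and bounded at $t=\infty$, and ``the same pole structure'' does not give this. A priori $C$ may have a double pole at $m_{ij}$ and polynomial growth at $\infty$. Any cancellation must come from the specific relation $f_j^-=\exp(m_{ij}\tau_i)f_j^+$ produced by Theorem~\ref{thm:permiso}, which your sketch never uses.

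The fix is to drop the quotient, because the relation is a pointwise algebraic identity. Write $f_i^+=\langle a\rangle$, $f_i^-=\langle b\rangle$ with $\tau_i=a\wedge b$, $f_j^+=\langle\hat a\rangle$, and $\hat b:=\exp(m_{ij}\tau_i)\hat a$, which spans $f_j^-$. Consider the orthogonal map
$\Psi(t):=\Gamma_{f_i^-}^{f_j^-}(1-\tfrac{t}{m_{ij}})\exp(t\tau_i)\Gamma_{f_i^+}^{f_j^+}(1-\tfrac{t}{m_{ij}})^{-1}$.
It fixes $\hat a$, $\hat b$ and $\{a,b,\hat a\}^\perp$. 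The orthogonal complement of this fixed space is the null plane $\langle\hat a,\hat b\rangle$, so $\Psi(t)=I+c\,\hat a\wedge\hat b$. Evaluating on $a$ then gives
\[
\Psi(t)=\exp(t\hat\tau),\qquad \hat\tau=\frac{\hat a\wedge\hat b}{m_{ij}^2\,(a,\hat a)\,(b,\hat a)}\in f_j^+\wedge f_j^-.
\]
Composing gauges gives $\Gamma_j^-(t)=\exp(t\hat\tau)\bullet\Gamma_j^+(t)$. Moreover $\hat\tau=\tau_j$, because two such gauges differ by a constant element of the line $\wedge^2\Lambda_j$, and that line moves with $\Lambda_j$.

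Evaluating $\exp(t\tau_j)\Gamma_{f_i^+}^{f_j^+}(1-\tfrac{t}{m_{ij}})=\Gamma_{f_i^-}^{f_j^-}(1-\tfrac{t}{m_{ij}})\exp(t\tau_i)$ at $x$ is then exactly the discrete relation on the edge $(ij)$. The same identity also handles your compatibility step directly, instead of through uniqueness. Take $t=m_{i\ell}$ and combine with $f_k^\pm=\Gamma_{f_i^\pm}^{f_j^\pm}(1-\tfrac{m_{i\ell}}{m_{ij}})f_\ell^\pm$ and $f_\ell^-=\exp(m_{i\ell}\tau_i)f_\ell^+$; this yields $f_k^-=\exp(m_{i\ell}\tau_j)f_k^+$.
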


As the transformation theory of $\Omega$-surfaces is induced by that of the isothermic sphere congruences, the integrable reductions via polynomial conserved quantities naturally apply to $\Omega$-surfaces.
\begin{corollary}
	Discrete $\Omega$-surfaces enveloped by discrete special isothermic sphere congruences of type $d$ are obtained as Bianchi quadrilaterals of the B\"acklund-type Lie-Darboux transformations.
\end{corollary}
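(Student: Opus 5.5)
The plan is to mirror the proof of Theorem~\ref{mainthm}, with the isotropic/finite-parameter permutability of Theorem~\ref{thm:permiso} in the role that two-step Darboux permutability played there. Let $\Lambda_0 = f_0^+ \oplus f_0^-$ be a smooth $\Omega$-surface, where $f_0^\pm$ is an isotropic Darboux pair of isothermic sphere congruences. A Lie-Darboux transform $\Lambda_1 = f_1^+ \oplus f_1^-$ with parameter $\mu$ arises as follows. Take a Darboux transform $f_1^+$ of $f_0^+$ with parameter $\mu$. Then let $f_1^-$ be the fourth surface supplied by Theorem~\ref{thm:permiso}: it is a Darboux transform of $f_0^-$ with the same parameter and an isotropic Darboux transform of $f_1^+$. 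A lattice $\Lambda_{m,n}$ of Lie-Darboux transforms obtained through permutability therefore produces two lattices of isothermic sphere congruences, $f^+_{m,n}$ and $f^-_{m,n}$. Within each sign, these are related by (finite parameter) Darboux transforms. The lattices $f^+_{m,n}$ and $f^-_{m,n}$ are in turn isotropic Darboux pairs for every $(m,n)$.

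First, I will fix $x\in\Sigma^2$ and set $F^\pm_{m,n} := f^\pm_{m,n}(x)$. By the argument used in the proof of Theorem~\ref{mainthm} (via \cite[Lemma~4.7]{burstall_IsothermicSubmanifoldsSymmetric_2011}), each of $F^+$ and $F^-$ is a discrete isothermic net. The edge labels $m_{ij}$ are given by the spectral parameters, and these agree on opposite edges because the parameters are preserved under permutability. Each $F^\pm_{m,n}$ is a line in $\Lambda_{m,n}(x)$, so $F^+_{m,n}\oplus F^-_{m,n} = \Lambda_{m,n}(x)$.

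Second, I need $F^+$ and $F^-$ to form a discrete isotropic Darboux pair in the sense of \cite{burstall_DiscreteOmeganetsGuichard_2023}. After that, \cite[Theorem~6.2]{burstall_DiscreteOmeganetsGuichard_2023} identifies $\Lambda_{m,n}(x)$ as a discrete $\Omega$-surface. The smooth isotropic relation gives $\hat\Gamma^-(t)=\exp(t\tau)\bullet\Gamma^+(t)$ with $\tau\in\Gamma(f^+\wedge f^-)$, and the Darboux relations give gauges by the Lorentz boosts $\Gamma_{f^\pm_i}^{f^\pm_j}(1-\tfrac{t}{\mu})$. Composing these gauges around the square formed by $f^+_i,f^+_j,f^-_j,f^-_i$ and evaluating at $x$ should give the identity
\[
\exp(t\tau_j)\,\Gamma_{F^+_i}^{F^+_j}(1-\tfrac{t}{m_{ij}}) = \Gamma_{F^-_i}^{F^-_j}(1-\tfrac{t}{m_{ij}})\,\exp(t\tau_i).
\]
This is the discrete isotropic Darboux condition, which would finish the first theorem.

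For the corollary, I will apply Theorem~\ref{mainthm} separately to the lattices $f^+$ and $f^-$, which are B\"acklund lattices of special isothermic congruences of type $d$. This makes $F^\pm$ discrete special isothermic of type $d$, so the discrete $\Omega$-surface is enveloped by such congruences. The main obstacle I expect is the second step. Deducing the displayed identity requires uniqueness: two parallel sections of a flat connection that agree at one point agree everywhere, and the gauge relating $\Gamma^+_j$ to $\Gamma^-_j$ must be identified as $\exp(t\tau_j)$ rather than some other element of the stabiliser. I would try to settle this by comparing the two sides on the null lines $F^\pm_{i}$ and $F^\pm_j$ and on their orthogonal complements directly.
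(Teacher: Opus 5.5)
Your plan follows the paper's route: the corollary is the preceding theorem (Bianchi quadrilaterals of Lie--Darboux transforms give isotropic Darboux pairs of discrete isothermic nets, hence discrete $\Omega$-nets by \cite[Theorem~6.2]{burstall_DiscreteOmeganetsGuichard_2023}) combined with Theorem~\ref{mainthm} applied to the enveloping sphere-congruence lattices, so you can replace your unfinished second step by an appeal to that theorem rather than reprove it. One caution: if only $f^+$ is assumed to carry a degree-$d$ conserved quantity $p(t)$, the induced quantity $\exp(t\tau)p(t)=p(t)+t\tau p(t)$ of $f^-$ generally has degree $d+1$, so apply Theorem~\ref{mainthm} to the $f^-$ lattice with the same $d$ only when $f^-$ is itself assumed special of type $d$ (as in the linear Weingarten case).
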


\subsubsection{Isothermic surfaces}
Isothermic surfaces in Riemannian space forms are $\Omega$-surfaces such that one of the isothermic sphere congruences admits a timelike constant conserved quantity, where the constant conserved quantity serves as the point sphere complex.
An integrable reduction via timelike constant conserved quantity from the case of $\Omega$-surfaces to isothermic surfaces, we recover yet another approach to showing that discrete isothermic surfaces are obtained as Bianchi quadrilaterals of Darboux transformations.

When one of the isothermic sphere congruences admits a spacelike constant conserved quantity, then the $\Omega$-surface projects to an isothermic surface in Lorentzian space forms with constant sectional curvatures: Minkowski $3$-space, de Sitter $3$-space and anti-de Sitter $3$-space.
Thus, we also obtain an analogous characterization of discrete isothermic surfaces in these space forms via permutability (see, for example, \cite[Definition~4.1]{yasumotoDiscreteMaximalSurfaces2015}).

\subsubsection{Guichard surfaces}
Guichard surfaces \cite{guichardSurfacesIsothermiques1900} constitute another integrable class of surfaces, that is a subclass of $\Omega$-surfaces, and include well-known surface classes, including pseudospherical surfaces.
These surfaces are characterized within the class of $\Omega$-surfaces by one of the isothermic sphere congruences admitting a linear conserved quantity $p(t)$ such that $(p(t),p(t))$ is linear in $t$ with non-zero constant term \cite[Theorem~5.7]{burstall_PolynomialConservedQuantities_2019}.
When one considers B\"{a}cklund-type Lie-Darboux transformations for Guichard surfaces, then it is shown in \cite[Section~5.2.2]{burstall_PolynomialConservedQuantities_2019} that these are equivalent to Eisenhart transformations \cite{eisenhart_TransformationsSurfacesGuichard_1914}.

On the other hand, discrete Guichard surfaces \cite[p.\ 383]{schiefUnificationClassicalNovel2003a} are also given a similar characterization using linear conserved quantities of the enveloping discrete isothermic sphere congruence in \cite[Theorem~7.11]{burstall_DiscreteOmeganetsGuichard_2023}.
Therefore, we conclude as follows.
\begin{corollary}
	Discrete Guichard surfaces are obtained as Bianchi quadrilaterals of Eisenhart transformations.
\end{corollary}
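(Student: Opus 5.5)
The statement is the Guichard corollary. I would obtain it by combining Theorem~\ref{mainthm} with the two characterizations of Guichard surfaces recalled above, one smooth and one discrete. The argument follows the $\Omega$-surface corollary, specialized to degree $d=1$ with the extra normalization on $(p(t),p(t))$.

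\emph{Step 1: the smooth grid.} Start with a smooth Guichard surface $\Lambda_{0,0}=f^+_{0,0}\oplus f^-_{0,0}$. By \cite[Theorem~5.7]{burstall_PolynomialConservedQuantities_2019} we may assume $f^+_{0,0}$ admits a linear conserved quantity $p(t)$ such that $(p(t),p(t))$ is linear in $t$ with non-zero constant term. An Eisenhart transform of $\Lambda_{0,0}$ is, by \cite[Section~5.2.2]{burstall_PolynomialConservedQuantities_2019}, a B\"acklund-type Lie-Darboux transform. It is therefore induced by a B\"acklund transform $\hat f^+$ of $f^+$ (with $p(\mu)\perp\hat f^+$), and $\hat f^-$ is then obtained from Theorem~\ref{thm:permiso}.

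Build the lattice $\Lambda_{m,n}$ by choosing Eisenhart transforms along the two axes. The interior surfaces come from permutability of Darboux transformations of the isothermic congruences $f^+_{m,n}$. By \cite[Theorem~3.6]{burstall_SpecialIsothermicSurfaces_2012}, each fourth surface $f^+_k$ is again special of type $1$ and is a simultaneous B\"acklund transform. The companions $f^-_{m,n}$ are then determined uniquely by Theorem~\ref{thm:permiso}.

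\emph{Step 2: the discrete net.} Fix $x\in\Sigma^2$. By Theorem~\ref{mainthm}, $F^+_{m,n}:=f^+_{m,n}(x)$ is a discrete special isothermic net of type $1$. Its polynomial conserved quantity is $P(t)_{m,n}=p_{m,n}(t)|_x$, where each $p_{m,n}$ is obtained from $p$ by successive applications of \eqref{eq:pphat}.

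The normalization is preserved along the way. Each $\Gamma$ in \eqref{eq:pphat} is orthogonal, so $(P(t)_{m,n},P(t)_{m,n})=(p(t),p(t))$. This is linear in $t$ with the same non-zero constant term, which is exactly the discrete Guichard condition.

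\emph{Step 3: the main obstacle.} The hard part is showing that $F^-_{m,n}:=f^-_{m,n}(x)$ is a discrete isotropic Darboux transform of $F^+$, so that $F^+\oplus F^-$ is a discrete $\Omega$-net in the sense of \cite[Theorem~6.2]{burstall_DiscreteOmeganetsGuichard_2023}. I would take this from the preceding theorem on $\Omega$-surfaces as Bianchi quadrilaterals of Lie-Darboux transformations, since Eisenhart transformations are a special case of those.

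Given that, \cite[Theorem~7.11]{burstall_DiscreteOmeganetsGuichard_2023} identifies the resulting net as a discrete Guichard surface. The one point to check carefully is that the conserved quantity required there lives on the same congruence $F^+$, with the same normalization as in Step 2.
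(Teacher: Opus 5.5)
Your proposal is correct and follows essentially the same route as the paper: Theorem~\ref{mainthm} with $d=1$ is applied to the isothermic sphere congruence, and the $\Omega$-surface theorem supplies the isotropic Darboux pair structure of the discrete net. The identification of Eisenhart transformations as B\"acklund-type Lie-Darboux transformations and the discrete characterization of \cite[Theorem~7.11]{burstall_DiscreteOmeganetsGuichard_2023} then finish the argument. Your explicit check that $(P(t),P(t))=(p(t),p(t))$ holds is a detail the paper leaves implicit in the phrase ``similar characterization,'' and it is a worthwhile addition; it uses that the gauge transformations are orthogonal Lorentz boosts and that $p(t)$ is parallel for a metric connection.
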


\subsubsection{$L$-isothermic surfaces}

$L$-isothermic surfaces are an integrable class of surfaces characterized by admitting curvature line coordinates for which the third fundamental form is conformal.
They are another subclass of $\Omega$-surfaces, where one of the isothermic sphere congruences admits a constant lightlike conserved quantity. 
The B\"{a}cklund-type Lie-Darboux transformations of $L$-isothermic surfaces coincide with Bianchi-Darboux transformations of $L$-isothermic surfaces \cite{musso_BianchiDarbouxTransformLisothermic_2000}.

As discrete $L$-isothermic surfaces can also be characterized by the existence of constant lightlike conserved quantities, we conclude as follows:
\begin{corollary}
	Discrete $L$-isothermic surfaces are obtained as Bianchi quadrilaterals of Bianchi-Darboux transformations.
\end{corollary}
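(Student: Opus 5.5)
The plan is to argue exactly as for the preceding corollaries: pass through the isothermic sphere congruences enveloped by the $\Omega$-surface, and apply Theorem~\ref{mainthm} in the degree $d=0$ case. Let $\Lambda = f^+\oplus f^-$ be a smooth $L$-isothermic surface. Here $f^\pm$ is an isotropic Darboux pair of isothermic sphere congruences, and $f^+$ admits a constant lightlike conserved quantity $p(t)=p^{(0)}$ with $(p^{(0)},p^{(0)})=0$.

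First I make the transformation theory explicit. A Lie-Darboux transform $\hat\Lambda=\hat f^+\oplus\hat f^-$ with parameter $\mu$ is built in two steps. One takes a Darboux transform $\hat f^+$ of $f^+$. One then takes the fourth surface $\hat f^-$ supplied by Theorem~\ref{thm:permiso}. The transform is of B\"acklund type exactly when $p(\mu)=p^{(0)}\perp\hat f^+$. In that case \eqref{eq:pphat} gives $\hat p(t)=\Gamma_{f^+}^{\hat f^+}(1-\tfrac{t}{\mu})p^{(0)}=p^{(0)}$, since $p^{(0)}$ is then orthogonal to both $f^+$ and $\hat f^+$. So $\hat f^+$ has the \emph{same} constant lightlike conserved quantity. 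By \cite{musso_BianchiDarbouxTransformLisothermic_2000}, as recalled above, these B\"acklund-type Lie-Darboux transforms are precisely the Bianchi-Darboux transforms. Hence a lattice of Bianchi-Darboux transforms $\Lambda_{m,n}$ produces two lattices. The first, $f^+_{m,n}$, is a lattice of B\"acklund transforms of degree $0$. The second, $f^-_{m,n}$, is a lattice of Darboux transforms with $f^+_{m,n},f^-_{m,n}$ an isotropic Darboux pair for every $(m,n)$.

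The main step is the closing of each elementary quadrilateral. For the $+$ side, Bianchi permutability together with \cite[Theorem~3.6]{burstall_SpecialIsothermicSurfaces_2012} makes the fourth surface $f^+_k$ again special of type $0$, as in the proof of Theorem~\ref{mainthm}. For the $-$ side, I would use the uniqueness in Theorem~\ref{thm:permiso} to show that $f^-_k$ is well defined. Concretely, $f^-_k$ obtained from $f^+_k$ via the isotropic transform must agree with the Darboux-permutability fourth surface of $f^-_j,f^-_\ell$. This is the step I expect to be the main obstacle: one has to check that the three permutability theorems (two Darboux, one isotropic) are mutually compatible. I would attack it by viewing the configuration as a $3$-cube of transforms, with the isotropic direction as the third direction. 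Uniqueness of each fourth surface then forces the cube to close, so $\Lambda_k=f^+_k\oplus f^-_k$ is the common Bianchi-Darboux transform of $\Lambda_j$ and $\Lambda_\ell$.

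Fixing $x\in\Sigma^2$, Theorem~\ref{mainthm} shows that $F^+_{m,n}=f^+_{m,n}(x)$ is a discrete special isothermic net of type $0$. Its conserved quantity is the constant lightlike vector $P=p^{(0)}$, since the degree-$0$ edge property reads $\Gamma(t)_{ji}P=P$. Likewise $F^-_{m,n}=f^-_{m,n}(x)$ is discrete isothermic by \cite[Lemma~4.7]{burstall_IsothermicSubmanifoldsSymmetric_2011}. Moreover $F^\pm$ form a discrete isotropic Darboux pair, because pointwise evaluation of the isotropic Darboux relations on each face gives the discrete relation, by the same argument as Lemma~\ref{onlylem}. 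By \cite[Theorem~6.2]{burstall_DiscreteOmeganetsGuichard_2023}, $F^+\oplus F^-$ is then a discrete $\Omega$-net. Its enveloping congruence $F^+$ carries a constant lightlike conserved quantity, so it is a discrete $L$-isothermic surface by the stated characterization.
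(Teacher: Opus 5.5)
Your argument follows the paper's route. You pass to the isothermic sphere congruence $f^+$ carrying the constant lightlike conserved quantity, apply Theorem~\ref{mainthm} with $d=0$, and conclude via \cite{musso_BianchiDarbouxTransformLisothermic_2000} and the discrete characterization of $L$-isothermic nets. You also observe that $\hat p=p^{(0)}$, so the degree-$0$ edge property gives $P_j=P_i=p^{(0)}$ and one fixed lightlike vector serves the whole discrete net. That observation is correct, and the paper leaves it implicit.

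The one step whose justification does not hold is your closing of the cube on the $f^-$ side. Uniqueness of the fourth vertex on each face only makes each candidate for $f^-_k$ well defined. There are three candidates: the surface given by Theorem~\ref{thm:permiso} on the face $(f^+_j,f^+_k,f^-_k,f^-_j)$, the one given on the face $(f^+_\ell,f^+_k,f^-_k,f^-_\ell)$, and the Darboux fourth surface of $f^-_i,f^-_j,f^-_\ell$. Uniqueness gives no reason for these three to coincide. That is a 3D-consistency statement, for example that the first candidate is also an $m_{ij}$-Darboux transform of $f^-_\ell$, and it needs its own computation.

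The paper does not derive this either. Speaking of Bianchi quadrilaterals of Lie--Darboux (equivalently Bianchi--Darboux) transformations, as in its theorem on discrete $\Omega$-surfaces in Section~\ref{sec:three}, already presupposes their permutability. That is, it presupposes a $\Lambda_k$ that is simultaneously a transform of $\Lambda_j$ and of $\Lambda_\ell$. If you take that as input, your uniqueness reasoning works: $f^-_k$ is then a Darboux transform of $f^-_j$ and of $f^-_\ell$ with the appropriate parameters, hence equals their Darboux fourth surface, and \cite[Lemma~4.7]{burstall_IsothermicSubmanifoldsSymmetric_2011} applies.

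Similarly, the claim that $F^\pm$ form a discrete isotropic Darboux pair is not literally the evaluation argument of Lemma~\ref{onlylem}. There you must evaluate a relation between gauge transformations, not transport a single section. The paper likewise does not spell this out; it deduces its $\Omega$-surface theorem directly from \cite[Theorem~6.2]{burstall_DiscreteOmeganetsGuichard_2023}.
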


\subsubsection{Linear Weingarten surfaces}
Non-tubular linear Weingarten surfaces in space forms are those surfaces that satisfy an affine linear relation between the Gauss and mean curvatures.
They admit a characterization as a subclass of $\Omega$-surfaces where the pair of isothermic sphere congruences both admit constant conserved quantities \cite{burstallLieGeometryLinear2012}.
Discrete linear Weingarten surfaces that are also circular nets admit an analogous characterization in terms of constant conserved quantities \cite{burstall_DiscreteLinearWeingarten_2018}.
\begin{corollary}
	Discrete linear Weingarten surfaces are obtained as Bianchi quadrilaterals of Lie-Darboux transformations keeping the linear Weingarten condition.
\end{corollary}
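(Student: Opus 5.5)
We argue exactly as for the preceding corollaries, now with two isothermic sphere congruences at once, each of type $d=0$.

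\textbf{Smooth setup.} Let $\Lambda = f^+\oplus f^-$ be a non-tubular linear Weingarten surface. By \cite{burstallLieGeometryLinear2012}, $f^\pm$ is an isotropic Darboux pair of isothermic sphere congruences, and each admits a constant conserved quantity $p^\pm$. A Lie-Darboux transform with parameter $\mu$ is built as follows. We take a Darboux transform $\hat f^+$ of $f^+$ with parameter $\mu$. Theorem~\ref{thm:permiso} then gives the unique $\hat f^-$ that is simultaneously a Darboux transform of $f^-$ with parameter $\mu$ and an isotropic Darboux transform of $\hat f^+$. We then set $\hat\Lambda=\hat f^+\oplus \hat f^-$. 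Following \cite{burstallLieGeometryLinear2012}, we read ``keeping the linear Weingarten condition'' as: both $\hat f^\pm$ again admit the constant conserved quantities $\hat p^\pm$ obtained from \eqref{eq:pphat}. By the discussion before the definition of B\"acklund transforms, this holds exactly when $p^\pm \perp \hat f^\pm$, that is, when $\hat f^\pm$ are B\"acklund transforms of type $0$ of $f^\pm$.

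\textbf{Building the lattice.} Starting from $\Lambda_{0,0}$, we choose Lie-Darboux transforms $\Lambda_{m,0}$ and $\Lambda_{0,n}$ that keep the condition, and we fill in $\Lambda_{m,n}$ by permutability. On the $f^+$ side this is precisely the lattice of B\"acklund transforms in Theorem~\ref{mainthm} with $d=0$. The first point to check is that the $f^-$ side is also such a lattice. For each quadrilateral, $f^-_{1,1}$ is determined in two ways: by permutability of Darboux transforms of $f^-$, and as the isotropic Darboux partner of $f^+_{1,1}$ from Theorem~\ref{thm:permiso}. These two must coincide. I would argue this by uniqueness. The surface produced by permutability on the $f^-$ side is a Darboux transform of $f^-_{1,0}$ and of $f^-_{0,1}$. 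Lifting the gauge relations $\hat\Gamma(t)=\exp(t\tau)\bullet\Gamma(t)$ through the boosts $\Gamma_f^{\hat f}(1-\tfrac t\mu)$ should show that it is also an isotropic Darboux transform of $f^+_{1,1}$. The uniqueness in Theorem~\ref{thm:permiso} then identifies the two surfaces. I expect this compatibility (a $3$-dimensional consistency of the mixed permutability) to be the main obstacle.

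\textbf{Applying Theorem~\ref{mainthm} on both sides.} Granting compatibility, we apply Theorem~\ref{mainthm} with $d=0$ to each of $f^\pm$. Evaluating at a fixed point $x$, the nets $F^\pm_{m,n}:=f^\pm_{m,n}(x)$ are discrete special isothermic surfaces of type $0$. Explicitly, the constant vectors $p^\pm$ satisfy the degree $0$ edge property on every edge. This is because $\Gamma(t)_{ji}p=p$ holds precisely when $p\perp F_i,F_j$, and the same condition at the smooth level shows that the constant conserved quantity propagates unchanged through the whole lattice.

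\textbf{Discrete identification.} Since $f^\pm_{m,n}$ is an isotropic Darboux pair at every vertex, the nets $F^\pm$ form a discrete isotropic Darboux pair, as in the preceding theorem on discrete $\Omega$-surfaces and \cite[Theorem~6.2]{burstall_DiscreteOmeganetsGuichard_2023}. Hence $F^+\oplus F^-$ is a discrete $\Omega$-surface whose two enveloping isothermic congruences both carry constant conserved quantities. By the characterization in \cite{burstall_DiscreteLinearWeingarten_2018}, this is a discrete linear Weingarten surface (circular net). Finally, we note that the relevant nondegeneracy assumptions (adjacent points not orthogonal, and non-tubularity) are generic and may be assumed.
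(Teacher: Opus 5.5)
Your proposal is correct and follows essentially the paper's route: apply Theorem~\ref{mainthm} with $d=0$ to each of the two enveloping isothermic sphere congruences, then conclude with the discrete $\Omega$-surface characterization \cite[Theorem~6.2]{burstall_DiscreteOmeganetsGuichard_2023} and the constant-conserved-quantity characterization of discrete linear Weingarten circular nets in \cite{burstall_DiscreteLinearWeingarten_2018}. The compatibility of the two constructions of $f^-_{1,1}$, which you single out as the main obstacle, is exactly what the paper takes as given through the permutability of Lie-Darboux transformations behind its preceding theorem on discrete $\Omega$-surfaces, so you can cite that result (as you already do in your last step) instead of proving it.
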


Further integrable reductions can be considered: a \emph{linear Weingarten surface of Bryant-type} or a  \emph{linear Weingarten surface of Bianchi-type} admits a characterization where the pair of isothermic sphere congruences have constant conserved quantities, and one of them is lightlike \cite{burstallLieGeometryLinear2012}.
An analogous characterization exists for discrete linear Weingarten surfaces of Bryant-type  or Bianchi-type \cite{pemberDiscreteWeierstrasstypeRepresentations2023}.
\begin{corollary}
	Discrete linear Weingarten surfaces of Bryant-type or Bianchi-type are obtained as Bianchi quadrilaterals of Lie-Darboux transformations keeping the Bryant-type or Bianchi-type linear Weingarten condition, respectively.
\end{corollary}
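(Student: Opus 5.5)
The plan is to reduce the statement to Theorem~\ref{mainthm} in degree $d=0$, applied separately to each of the two enveloping isothermic sphere congruences, and then to reassemble the result using the permutability of Theorem~\ref{thm:permiso}.

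\textbf{Step 1: set up the smooth lattice.} Let $\Lambda = f^+ \oplus f^-$ be a smooth linear Weingarten surface of Bryant-type or Bianchi-type. By \cite{burstallLieGeometryLinear2012}, $f^\pm$ is an isotropic Darboux pair of isothermic sphere congruences admitting constant conserved quantities $\mathfrak{q}^\pm$, one of which is lightlike. A Lie-Darboux transform of $\Lambda$ with spectral parameter $\mu$ arises as follows:
\begin{enumerate}[label=(\roman*)]
\item take a Darboux transform $\hat f^+$ of $f^+$ with parameter $\mu$;
\item use Theorem~\ref{thm:permiso} to obtain $\hat f^-$, which is a Darboux transform of $f^-$ with parameter $\mu$ and an isotropic Darboux transform of $\hat f^+$;
\item set $\hat\Lambda = \hat f^+ \oplus \hat f^-$.
\end{enumerate}
I will read ``keeping the Bryant-type or Bianchi-type condition'' to mean that $\hat f^\pm$ are B\"acklund transforms of $f^\pm$ for the constant conserved quantities $\mathfrak{q}^\pm$. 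The defining condition is $\mathfrak{q}^\pm \perp \hat f^\pm$.

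\textbf{Step 2: check that degree and type are preserved.} For a constant conserved quantity, the transformed quantity is $\hat{\mathfrak q}^\pm(t) = \Gamma_{f^\pm}^{\hat f^\pm}(1-\tfrac{t}{\mu})\mathfrak{q}^\pm$. It is polynomial of degree at most $0$, hence constant. Since each boost is orthogonal, $(\hat{\mathfrak q}^\pm,\hat{\mathfrak q}^\pm) = (\mathfrak q^\pm,\mathfrak q^\pm)$, so the lightlike one stays lightlike. This is what keeps the Bryant/Bianchi type fixed.

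\textbf{Step 3: apply Theorem~\ref{mainthm} on each sheet.} Build a lattice of such Lie-Darboux transforms with edge parameters $m_{ij}$, and fix a point $x \in \Sigma^2$. The $+$-sheet $\{f^+_{m,n}\}$ is then a lattice of B\"acklund transforms of type $0$, and so is the $-$-sheet $\{f^-_{m,n}\}$, both with the same edge labels. Theorem~\ref{mainthm} shows that $F^\pm_{m,n} := f^\pm_{m,n}(x)$ are discrete special isothermic sphere congruences of type $0$, whose constant conserved quantities are the discrete values of $\hat{\mathfrak q}^\pm$. One of these is lightlike at every vertex.

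\textbf{Step 4: the main obstacle.} The hardest part is showing that the two sheets form a discrete isotropic Darboux pair, so that $F^+ \oplus F^-$ is a discrete $\Omega$-net in the sense of \cite[Theorem~6.2]{burstall_DiscreteOmeganetsGuichard_2023}. Two facts are needed:
\begin{enumerate}[label=(\alph*)]
\item at each vertex, $f^+_{m,n}$ and $f^-_{m,n}$ are an isotropic Darboux pair, which is immediate from the construction in Step 1;
\item the discrete flat connections of $F^+$ and $F^-$ are gauge-related by $\exp(t\tau)$ with $\tau \in f^+\wedge f^-$, vertexwise.
\end{enumerate}
For (b), I will first use the uniqueness in Theorem~\ref{thm:permiso} to show that the permutability quadrilaterals on the $+$ and $-$ sheets are compatible. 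Then I will evaluate the smooth relation $\hat\Gamma(t) = \exp(t\tau)\bullet\Gamma(t)$ at $x$ on each edge, in the same way that Lemma~\ref{onlylem} evaluates \eqref{eq:pphat}.

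\textbf{Step 5: conclude.} With Steps 3 and 4 in hand, the discrete characterization of \cite{pemberDiscreteWeierstrasstypeRepresentations2023} identifies $F^+ \oplus F^-$ as a discrete linear Weingarten surface of the same Bryant or Bianchi type.
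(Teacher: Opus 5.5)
Your reduction is the one the paper intends. There the corollary follows from three ingredients: Theorem~\ref{mainthm} with $d=0$ applied to both enveloping sphere congruences; the theorem in Section~\ref{sec:three} that Bianchi quadrilaterals of Lie--Darboux transformations are discrete $\Omega$-surfaces (resting on \cite[Theorem~6.2]{burstall_DiscreteOmeganetsGuichard_2023}); and the discrete characterization of \cite{pemberDiscreteWeierstrasstypeRepresentations2023}.

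Steps 1--3 and 5 are fine, provided each sheet is built by Bianchi permutability so that Theorem~\ref{mainthm} applies to it. Step 2 can even be sharpened: a constant conserved quantity satisfies $\eta\,\mathfrak q^\pm=0$, hence $\mathfrak q^\pm\perp f^\pm$. Together with $\mathfrak q^\pm\perp\hat f^\pm$, the boost fixes it, so $\hat{\mathfrak q}^\pm=\mathfrak q^\pm$. The gap is in Step 4, which you chose to reprove rather than cite, and the mechanism you propose there does not work.

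\textbf{Step 4(b).} The relation $\hat\Gamma(t)=\exp(t\tau)\bullet\Gamma(t)$, equivalently $\eta^-=\eta^++\dif{\tau}$, relates the two smooth connections at a \emph{single} lattice site. Evaluating it at $x$ gives an identity between $1$-forms at $x$. It carries no information about the edge boosts $\Gamma_{F^\pm_i}^{F^\pm_j}(1-\tfrac{t}{m_{ij}})$, which involve two sites. The analogy with Lemma~\ref{onlylem} fails because \eqref{eq:pphat} is already a pointwise algebraic relation \emph{along an edge}. What you would actually have to evaluate at $x$ is the commuting square of gauge transformations
\[
\exp(t\tau_j)\,\Gamma_{f^+_i}^{f^+_j}(1-\tfrac{t}{m_{ij}})=\Gamma_{f^-_i}^{f^-_j}(1-\tfrac{t}{m_{ij}})\,\exp(t\tau_i),
\]
holding identically on $\Sigma^2$. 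This does not follow from the vertex relations at $i$ and $j$. A priori both sides gauge the family of $f^+_i$ into that of $f^-_j$, so they differ only by a gauge parallel for the family of $f^+_i$, which need not be the identity. The identity has to be extracted from the actual construction behind Theorem~\ref{thm:permiso}.

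\textbf{Step 4(a).} This is not immediate off the axes either. You need the Bianchi fourth surfaces $f^+_{1,1}$ and $f^-_{1,1}$ of the two sheets to form an isotropic Darboux pair. Uniqueness in Theorem~\ref{thm:permiso} alone does not give this, because $f^+_{1,1}$ has many isotropic Darboux transforms. The candidates that Theorem~\ref{thm:permiso} produces from the $(1,0)$ side and from the $(0,1)$ side must first be shown to coincide, which is a three-dimensional consistency statement requiring a genuine computation.

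\textbf{Repair.} The simplest fix is to do what the paper does: invoke the Section~\ref{sec:three} theorem on discrete $\Omega$-surfaces for exactly this point, namely that $F^+$ and $F^-$ form a discrete isotropic Darboux pair. With that in hand, your Steps 3 and 5 complete the argument.
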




\begin{bibdiv}
\begin{biblist}

\bib{bianchi_LezioniDiGeometria_1903}{book}{
      author={Bianchi, Luigi},
       title={{Lezioni di Geometria Differenziale, Volume II}},
     edition={Seconda edizione},
   publisher={Enrico Spoerri},
     address={Pisa},
        date={1903},
}

\bib{bianchi_ComplementiAlleRicerche_1906}{article}{
      author={Bianchi, Luigi},
       title={Complementi alle ricerche sulle superficie isoterme},
        date={1906},
     journal={Ann. Mat. Pura Appl. (3)},
      volume={12},
      number={1},
       pages={19\ndash 54},
       doi={10.1007/BF02419495}
}

\bib{bobenko_SconicalCMCSurfaces_2016}{incollection}{
      author={Bobenko, Alexander~I.},
      author={Hoffmann, Tim},
       title={S-conical {{CMC}} surfaces. {{Towards}} a unified theory of
  discrete surfaces with constant mean curvature},
        date={2016},
   booktitle={Advances in discrete differential geometry},
   publisher={Springer},
     address={Berlin},
       pages={287\ndash 308},
      review={\MR{3587190}},
      doi={10.1007/978-3-662-50447-5_9}
}

\bib{bobenko_DiscreteIsothermicSurfaces_1996}{article}{
      author={Bobenko, Alexander~I.},
      author={Pinkall, Ulrich},
       title={Discrete isothermic surfaces},
        date={1996},
     journal={J. Reine Angew. Math.},
      volume={475},
       pages={187\ndash 208},
      review={\MR{1396732}},
      doi={10.1515/crll.1996.475.187}
}

\bib{bobenko_DiscreteSurfacesConstant_1996}{article}{
      author={Bobenko, Alexander~I.},
      author={Pinkall, Ulrich},
       title={Discrete surfaces with constant negative {{Gaussian}} curvature
  and the {{Hirota}} equation},
        date={1996},
     journal={J. Differential Geom.},
      volume={43},
      number={3},
       pages={527\ndash 611},
      review={\MR{1412677}},
      doi={10.4310/jdg/1214458324}
}

\bib{bobenko_DiscretizationSurfacesIntegrable_1999}{incollection}{
      author={Bobenko, Alexander~I.},
      author={Pinkall, Ulrich},
       title={Discretization of surfaces and integrable systems},
        date={1999},
   book={
   	title={Discrete integrable geometry and physics ({{Vienna}}, 1996)},
      editor={Bobenko, Alexander~I.},
      editor={Seiler, Ruedi},
      series={Oxford {{Lecture Ser}}. {{Math}}. {{Appl}}.},
      volume={16},
   publisher={Oxford Univ. Press},
     address={New York},},
       pages={3\ndash 58},
      review={\MR{1676682}},
}

\bib{burstall_NotesTransformationsIntegrable_2017}{incollection}{
      author={Burstall, Francis~E.},
       title={Notes on transformations in integrable geometry},
        date={2017},
   book={
   	title={Special metrics and group actions in geometry},
      editor={Chiossi, Simon~G.},
      editor={Fino, Anna},
      editor={Musso, Emilio},
      editor={Podest{\`a}, Fabio},
      editor={Vezzoni, Luigi},
      series={Springer {{INdAM Ser}}.},
      volume={23},
   publisher={Springer},
     address={Cham},},
       pages={59\ndash 80},
      review={\MR{3751962}},
      doi={10.1007/978-3-319-67519-0_3}
}

\bib{burstall_ConformalSubmanifoldGeometry_2010}{article}{
      author={Burstall, Francis~E.},
      author={Calderbank, David M.~J.},
       title={Conformal submanifold geometry {{I-III}}},
        date={2010},
      eprint={arXiv:1006.5700},
      url={https://arxiv.org/abs/1006.5700}
}

\bib{burstall_DiscreteOmeganetsGuichard_2023}{article}{
      author={Burstall, Francis~E.},
      author={Cho, Joseph},
      author={Hertrich-Jeromin, Udo},
      author={Pember, Mason},
      author={Rossman, Wayne},
       title={Discrete $\Omega$-nets and Guichard nets via discrete Koenigs
  nets},
        date={2023},
     journal={Proc. Lond. Math. Soc. (3)},
      volume={126},
      number={2},
       pages={790–836},
      review={\MR{4550152}},
      doi={10.1112/plms.12499}
}

\bib{burstall_IsothermicSubmanifoldsSymmetric_2011}{article}{
      author={Burstall, Francis~E.},
      author={Donaldson, Neil~M.},
      author={Pedit, Franz},
      author={Pinkall, Ulrich},
       title={Isothermic submanifolds of symmetric $R$-spaces},
        date={2011},
     journal={J. Reine Angew. Math.},
      volume={660},
       pages={191–243},
      review={\MR{2855825}},
      doi={10.1515/crelle.2011.075}
}

\bib{burstall_HarmonicToriSymmetric_1993}{article}{
      author={Burstall, Francis~E.},
      author={Ferus, Dirk},
      author={Pedit, Franz},
      author={Pinkall, Ulrich},
       title={Harmonic tori in symmetric spaces and commuting {{Hamiltonian}}
  systems on loop algebras},
        date={1993},
     journal={Ann. of Math. (2)},
      volume={138},
      number={1},
       pages={173\ndash 212},
      review={\MR{1230929}},
      doi={10.2307/2946637}
}

\bib{burstall_PolynomialConservedQuantities_2019}{article}{
      author={Burstall, Francis~E.},
      author={{Hertrich-Jeromin}, Udo},
      author={Pember, Mason},
      author={Rossman, Wayne},
       title={Polynomial conserved quantities of {{Lie}} applicable surfaces},
        date={2019},
     journal={Manuscripta Math.},
      volume={158},
      number={3-4},
       pages={505\ndash 546},
      review={\MR{3914961}},
      doi={10.1007/s00229-018-1033-0}
}

\bib{burstallLieGeometryLinear2012}{article}{
      author={Burstall, Francis~E.},
      author={{Hertrich-Jeromin}, Udo},
      author={Rossman, Wayne},
       title={Lie geometry of linear {{Weingarten}} surfaces},
        date={2012},
     journal={C. R. Math. Acad. Sci. Paris},
      volume={350},
      number={7-8},
       pages={413\ndash 416},
      review={\MR{2922095}},
      doi={10.1016/j.crma.2012.03.018}
}

\bib{burstall_DiscreteLinearWeingarten_2018}{article}{
      author={Burstall, Francis~E.},
      author={{Hertrich-Jeromin}, Udo},
      author={Rossman, Wayne},
       title={Discrete linear {{Weingarten}} surfaces},
        date={2018},
     journal={Nagoya Math. J.},
      volume={231},
       pages={55\ndash 88},
      review={\MR{3845588}},
      doi={10.1017/nmj.2017.11}
}

\bib{burstall_DiscreteSurfacesConstant_2014}{incollection}{
      author={Burstall, Francis~E.},
      author={{Hertrich-Jeromin}, Udo},
      author={Rossman, Wayne},
      author={Santos, Susana~D.},
       title={Discrete surfaces of constant mean curvature},
        date={2014},
   book={
   	title={Development in differential geometry of submanifolds},
      editor={Kobayashi, Shim-Pei},
      series={{{RIMS K\^oky\^uroku}}},
      volume={1880},
   publisher={Res. Inst. Math. Sci. (RIMS)},
     address={Kyoto},},
       pages={133\ndash 179},
              eprint={arXiv:0804.2707},
       url={https://arxiv.org/abs/0804.2707}
}

\bib{burstall_DiscreteSpecialIsothermic_2015}{article}{
      author={Burstall, Francis~E.},
      author={{Hertrich-Jeromin}, Udo},
      author={Rossman, Wayne},
      author={Santos, Susana~D.},
       title={Discrete special isothermic surfaces},
        date={2015},
     journal={Geom. Dedicata},
      volume={174},
       pages={1\ndash 11},
      review={\MR{3303037}},
      doi={10.1007/s10711-014-0001-4}
}

\bib{burstall_SpecialIsothermicSurfaces_2012}{article}{
      author={Burstall, Francis~E.},
      author={Santos, Susana~D.},
       title={Special isothermic surfaces of type $d$},
        date={2012},
     journal={J. Lond. Math. Soc. (2)},
      volume={85},
      number={2},
       pages={571–591},
      review={\MR{2901079}},
      doi={10.1112/jlms/jdr050}
}

\bib{cecilLieSphereGeometry2008}{book}{
      author={Cecil, Thomas~E.},
       title={Lie sphere geometry},
     edition={Second edition},
      series={Universitext},
   publisher={Springer},
     address={New York},
        date={2008},
        ISBN={978-0-387-74655-5},
      review={\MR{2361414}},
      doi={10.1007/978-0-387-74656-2}
}

\bib{cho_DiscreteIsothermicSurfaces_2025}{book}{
      author={Cho, Joseph},
      author={Naokawa, Kosuke},
      author={Ogata, Yuta},
      author={Pember, Mason},
      author={Rossman, Wayne},
      author={Yasumoto, Masashi},
       title={Discrete isothermic surfaces in {{Lie}} sphere geometry},
      series={Lecture {{Notes}} in {{Mathematics}}},
   publisher={Springer},
     address={Cham},
        date={2025},
      volume={2375},
        ISBN={978-3-031-95591-4 978-3-031-95592-1},
        doi={10.1007/978-3-031-95592-1},
      review={\MR{4982250}},
}

\bib{choSimpleFactorDressings2019}{article}{
      author={Cho, Joseph},
      author={Ogata, Yuta},
       title={Simple factor dressings and {{Bianchi}}--{{B\"acklund}}
  transformations},
        date={2019},
     journal={Illinois J. Math.},
      volume={63},
      number={4},
       pages={619\ndash 631},
      review={\MR{4032817}},
      doi={10.1215/00192082-7988989}
}

\bib{clarkeIntegrabilitySubmanifoldGeometry2012}{thesis}{
      author={Clarke, Daniel},
       title={Integrability in submanifold geometry},
        type={Ph.D. Thesis},
        organization={University of Bath},
        date={2012},
}

\bib{demoulinSurfacesOmega1911}{article}{
      author={Demoulin, Alphonse},
       title={Sur les surfaces $\Omega$},
        date={1911},
     journal={C. R. Acad. Sci. Paris},
      volume={153},
       pages={927–929},
}

\bib{demoulinSurfacesRSurfaces1911}{article}{
      author={Demoulin, Alphonse},
       title={Sur les surfaces $R$ et les surfaces $\Omega$},
        date={1911},
     journal={C. R. Acad. Sci. Paris},
      volume={153},
       pages={705–707},
}

\bib{demoulinSurfacesRSurfaces1911a}{article}{
      author={Demoulin, Alphonse},
       title={Sur les surfaces $R$ et les surfaces $\Omega$},
        date={1911},
     journal={C. R. Acad. Sci. Paris},
      volume={153},
       pages={590–593},
}

\bib{eisenhart_TransformationsSurfacesGuichard_1914}{article}{
      author={Eisenhart, Luther~Pfahler},
       title={Transformations of surfaces of {{Guichard}} and surfaces
  applicable to quadrics},
        date={1914},
     journal={Ann. Mat. Pura Appl. (3)},
      volume={22},
      number={1},
       pages={191\ndash 247},
       doi={10.1007/BF02419557}
}

\bib{guichardSurfacesIsothermiques1900}{article}{
      author={Guichard, C.},
       title={Sur les surfaces isothermiques},
        date={1900},
     journal={C. R. Acad. Sci. Paris},
      volume={130},
       pages={159\ndash 162},
}

\bib{hertrich-jeromin_IntroductionMobiusDifferential_2003}{book}{
      author={{Hertrich-Jeromin}, Udo},
       title={Introduction to {{M\"obius}} differential geometry},
      series={London {{Mathematical Society Lecture Note Series}}},
   publisher={Cambridge University Press},
     address={Cambridge},
        date={2003},
      volume={300},
      review={\MR{2004958}},
      doi={10.1017/CBO9780511546693}
}

\bib{hertrich-jeromin_DiscreteVersionDarboux_1999}{incollection}{
      author={{Hertrich-Jeromin}, Udo},
      author={Hoffmann, Tim},
      author={Pinkall, Ulrich},
       title={A discrete version of the {{Darboux}} transform for isothermic
  surfaces},
        date={1999},
   book={title={Discrete integrable geometry and physics ({{Vienna}}, 1996)},
      editor={Bobenko, Alexander~I.},
      editor={Seiler, Ruedi},
      series={Oxford {{Lecture Ser}}. {{Math}}. {{Appl}}.},
      volume={16},
   publisher={Oxford Univ. Press},
     address={New York},},
       pages={59\ndash 81},
      review={\MR{1676683}},
}

\bib{hertrich-jerominRemarksDarbouxTransform1997}{article}{
      author={{Hertrich-Jeromin}, Udo},
      author={Pedit, Franz},
       title={Remarks on the {{Darboux}} transform of isothermic surfaces},
        date={1997},
     journal={Doc. Math.},
      volume={2},
       pages={313\ndash 333},
      review={\MR{1487467}},
      doi={10.4171/DM/32}
}

\bib{hoffmann_DiscreteParametrizedSurface_2017}{article}{
      author={Hoffmann, Tim},
      author={Sageman-Furnas, Andrew~O.},
      author={Wardetzky, Max},
       title={A discrete parametrized surface theory in $\mathbb{R}^3$},
        date={2017},
     journal={Int. Math. Res. Not. IMRN},
      volume={2017},
      number={14},
       pages={4217–4258},
      review={\MR{3674170}},
      doi={10.1093/imrn/rnw015}
}

\bib{kobayashiCharacterizationsBianchiBacklundTransformations2005}{article}{
      author={Kobayashi, Shim-Pei},
      author={Inoguchi, Jun-ichi},
       title={Characterizations of {{Bianchi-B\"acklund}} transformations of
  constant mean curvature surfaces},
        date={2005},
     journal={Internat. J. Math.},
      volume={16},
      number={2},
       pages={101\ndash 110},
      review={\MR{2121843}},
      doi={10.1142/S0129167X05002801}
}

\bib{levi_BacklundTransformationsNonlinear_1980}{article}{
      author={Levi, D.},
      author={Benguria, R.},
       title={B\"acklund transformations and nonlinear differential difference
  equations},
        date={1980},
     journal={Proc. Nat. Acad. Sci. U.S.A.},
      volume={77},
      number={9, part 1},
       pages={5025\ndash 5027},
      review={\MR{587276}},
      doi={10.1073/pnas.77.9.5025}
}

\bib{musso_BianchiDarbouxTransformLisothermic_2000}{article}{
      author={Musso, Emilio},
      author={Nicolodi, Lorenzo},
       title={The Bianchi-Darboux transform of $L$-isothermic surfaces},
        date={2000},
     journal={Internat. J. Math.},
      volume={11},
      number={7},
       pages={911–924},
      review={\MR{1792958}},
      doi={10.1142/S0129167X00000465}
}

\bib{nijhoff_DirectLinearizationNonlinear_1983}{article}{
      author={Nijhoff, Frank~W.},
      author={Quispel, G. R.~W.},
      author={Capel, Hans~W.},
       title={Direct linearization of nonlinear difference-difference
  equations},
        date={1983},
     journal={Phys. Lett. A},
      volume={97},
      number={4},
       pages={125\ndash 128},
      review={\MR{719638}},
      doi={10.1016/0375-9601(83)90192-5}
}

\bib{pember_LieApplicableSurfaces_2020}{article}{
      author={Pember, Mason},
       title={Lie applicable surfaces},
        date={2020},
     journal={Comm. Anal. Geom.},
      volume={28},
      number={6},
       pages={1407\ndash 1450},
      review={\MR{4184823}},
      doi={10.4310/CAG.2020.v28.n6.a5}
}

\bib{pemberDiscreteWeierstrasstypeRepresentations2023}{article}{
      author={Pember, Mason},
      author={Polly, Denis},
      author={Yasumoto, Masashi},
       title={Discrete {{Weierstrass-type}} representations},
        date={2023},
     journal={Discrete Comput. Geom.},
      volume={70},
      number={3},
       pages={816\ndash 844},
      review={\MR{4650025}},
      doi={10.1007/s00454-022-00439-z}
}

\bib{schiefUnificationClassicalNovel2003a}{article}{
      author={Schief, Wolfgang~K.},
       title={On the unification of classical and novel integrable surfaces.
  {{II}}. {{Difference}} geometry},
        date={2003},
     journal={R. Soc. Lond. Proc. Ser. A Math. Phys. Eng. Sci.},
      volume={459},
      number={2030},
       pages={373\ndash 391},
      review={\MR{1997461}},
      doi={10.1098/rspa.2002.1008}
}

\bib{yasumotoDiscreteMaximalSurfaces2015}{article}{
      author={Yasumoto, Masashi},
       title={Discrete maximal surfaces with singularities in {{Minkowski}}
  space},
        date={2015},
     journal={Differential Geom. Appl.},
      volume={43},
       pages={130\ndash 154},
      review={\MR{3421881}},
      doi={10.1016/j.difgeo.2015.09.006}
}

\end{biblist}
\end{bibdiv}
\end{document}